\newcommand{\la}{\langle}
\newcommand{\ra}{\rangle}
\newcommand{\pr}{\partial}
\newcommand{\dom}{\Omega}
\newcommand{\sig}{\Sigma}
\newcommand{\gam}{\Gamma}
\newcommand{\re}{\mathfrak{Re}\;}
\newcommand{\im}{\mathfrak{Im}\;}
\newcommand{\N}{{\mathcal{N}}}
\newcommand{\R}{\mathbb{R}}
\newcommand{\C}{\mathbb{C}}
\newcommand{\Ord}{\mathscr{O}}
\newcommand{\dd}{\,\text{d}}
\newcommand{\supp}{\mbox{supp\;}}
\newtheorem{thm}{Theorem}
\newtheorem{lem}{Lemma}
\newtheorem{dfn}{Definition}
\title{An inverse problem for the\\ porous medium equation\\ with partial data and a possibly singular absorption term}
\author{C\u{a}t\u{a}lin I. C\^{a}rstea\thanks{School of Mathematics, Sichuan University, Chengdu, Sichuan, 610064, P.R.China; \mbox{email: catalin.carstea@gmail.com}}
\and Tuhin Ghosh\thanks{Department of Mathematics, Universit\"at Bielefeld, 33615 Bielefeld, Germany; \mbox{email: tghosh@math.uni-bielefeld.de}}
\and Gunther Uhlmann\thanks{Department of Mathematics, University of Washington, Seattle, WA 98195-4350, USA,
        and Institute for Advanced Study of the Hong Kong University of Science and Technology, Hong Kong; \mbox{email: gunther@math.washington.edu}}
}
\date{}
\begin{document}
\maketitle

\begin{abstract}
In this paper we prove uniqueness in the inverse boundary value problem for the three coefficient functions in the porous medium equation with an absorption term $\epsilon\pr_t u-\nabla\cdot(\gamma\nabla u^m)+\lambda u^q=0$, with $m>1$, $m^{-1}<q<\sqrt{m}$, with the space dimension 2 or higher. This is a degenerate parabolic type quasilinear PDE which has been used as a model for phenomena in fields such as gas flow (through a porous medium), plasma physics, and population dynamics. In the case when $\gamma=1$ a priori, we prove unique identifiability with data supported in an arbitrarily small part of the boundary. Even for the global problem we improve on previous work by obtaining uniqueness with a finite (rather than infinite) time of observation and also by introducing the additional absorption term $\lambda u^q$. 
\end{abstract}

\paragraph{Keywords:} Inverse problems, porous medium equation, nonlinear parabolic equations.

\section{Introduction}

Let $\dom\subset\R^n$, with $n\geq 2$, be a bounded, smooth domain, and let $T\in(0,\infty)$. We would like to consider the following equation in the space-time cylindrical domain $(0,T)\times\dom$
\begin{equation}\label{eq}
\left\{\begin{array}{l}\epsilon(x)\pr_t u(t,x) -\nabla\cdot(\gamma(x)\nabla u^m(t,x))+\lambda(x) u^q(t,x)=f(t,x),\\[5pt] u(0,x)=0,\quad u|_{[0,T)\times\pr\dom}=\phi(t,x),\quad u\geq0.\end{array}\right.
\end{equation}
In the above $m$ and $q$ are fixed parameters that we assume to satisfy 
\begin{equation}
 m>1,\quad m^{-1}<q<\sqrt{m}.
\end{equation}
 The coefficients $\epsilon$, $\gamma$, and $\lambda$ are bounded nonnegative functions. $\epsilon$ and $\gamma$ will be assumed to also have strictly positive lower bounds. We will also assume that $\epsilon,\gamma, \lambda\in C^\infty(\overline{\dom})$, for the sake of convenience. Consistency requires that $\phi(0,x)=0$, $\phi\geq 0$.

The equation \eqref{eq} is usually refered to as a porous medium equation, with an absorbtion term (i.e. the $\lambda u^q$ term). The name comes from its use in modeling the flow of a gas through a porous medium (see \cite{Sch},\cite{V}), but equations of this form are also used to model phenomena in other fields, such as plasma physics (see \cite{rosenau1983thermal}), or population dynamics (see \cite{namba1980density}). 

From the mathematical point of view, equation \eqref{eq} is a degenerate parabolic type quasilinear equation. Note that if $0<q<1$, then the absorption term $\lambda u^q$ is not Lipschitz in $u$ and  it is said to be singular in this case. Furthermore, equation \eqref{eq} is a particularly simple modification of the classical heat equation (with a lower order term). This has made it a popular object of study and the mathematical literature dedicated to the porous medium equation is vast. A good  survey of the field is the monograph \cite{V}. 

In this paper we are interested in the inverse boundary value problem associated to \eqref{eq}. Here this amounts to the following question: is the set of pairs of Dirichlet and Neumann data corresponding to a suitably large class of solutions of \eqref{eq} sufficient for the reconstruction of the coefficients $\epsilon$, $\gamma$, and $\lambda$? Historically, the first problem of this kind was proposed by Calder\'on in \cite{Ca}, for the conductivity equation $\nabla\cdot(\gamma\nabla u)=0$. The question of uniqueness in the original Calder\'on inverse boundary value problem (i.e. does the boundary data uniquely determine the coefficient $\gamma$) was answered affirmatively in \cite{Na} for $n=2$ and in  \cite{SU} for $n\geq3$. Since then, there have been many results covering analogous problems for linear and nonlinear equations. The full list of even the most important of these  is too extensive to include here. We restrict ourselves to mentioning some of the important results known for quasilinear and semilinear elliptic and parabolic equations. For semilinear equations, see \cite{CaKi}, \cite{ChKi}, \cite{FO}, \cite{I1}, \cite{IN}, \cite{IS}, \cite{KiUh}, \cite{KU}, \cite{KU2}, \cite{LLLS1}, \cite{LLLS2}, \cite{S2}. For quasilinear equations, not in divergence form, see \cite{I2}. For quasilinear equations in divergence form see \cite{CF1}, \cite{CF2}, \cite{CFKKU}, \cite{CK}, \cite{CNV}, \cite{EPS}, \cite{HS}, \cite{KN}, \cite{MU}, \cite{Sh}, \cite{S1}, \cite{S3}, \cite{SuU}, (also \cite{C} for quasilinear time-harmonic Maxwell systems).

Results for degenerate equations, such as equation \eqref{eq}, are quite few. Examples would include the following works for the weighted p-Laplace equation, \mbox{$\nabla\cdot(a(x)|\nabla u|^{p-2}\nabla u)=0$}, which is a degenerate elliptic quasilinear PDE: \cite{BKS}, \cite{B}, \cite{BHKS}, \cite{GKS}, \cite{BIK}, \cite{KW}. We note that a uniqueness result without additional constraints, such as monotonicity, has not yet been derived for the weighted p-Laplacian. 

An interesting (and also practically useful) variation of the inverse boundary value problem described above is the case of ``partial data''. The goal remains to prove uniqueness for the various coefficients appearing in the equation, but with the known boundary data supported/restricted to a proper subset of the boundary. In the original Calder\'{o}n problem this was solved in \cite{IUY} for $n=2$. For $n\geq3$, with some restrictions on the geometry of the subsets on which the boundary data is known, a result can be found in \cite{KSU}. We would also like to mention here the work \cite{DKSU} on the linearized Calder\'{o}n problem with partial data. For elliptic semilinear and quasilinear problems partial data results have been obtained in \cite{KKU}, \cite{KU}, \cite{KU2}, \cite{LLLS2}.

Finally, we would like to mention one existing result for the porous medium equation, namely \cite{CGN}, where the equation \eqref{eq}, without the absorption term, is considered. Uniqueness in the inverse boundary value problem is obtained when the boundary data are available for infinite time (i.e. $T=\infty$). In the case of the heat equation this is sufficient to then obtain uniqueness for data available on a finite time interval, since solutions are analytic in time. This is not the case for the porous medium equation (see \cite{V}), so uniqueness with a finite time of observation must be obtained by other means, which is one of the goals of this paper.

In this paper we consider the inverse boundary value problem for the equation \eqref{eq}, with a finite time of observation (i.e. $T<\infty$), in both the partial data and the global data cases.

\subsection{Existence of solutions}

Before stating the main result concering the inverse boundary value problem for equation \eqref{eq}, we first need to outline the sense in which we consider a function $u$ to be a solution of \eqref{eq}. The approach is adapted from \cite{V}.

Let $Q_T:=(0,T)\times\Omega$ and $S_T:=(0,T)\times\partial\Omega$. In order to define a suitable space of Dirichlet boundary data. let 
\begin{equation}
C_{t}(Q_T)=\left\{\varphi\in C^\infty(\overline{Q_T}): \supp\varphi\cap \left[\{T\}\times\dom \right]=\emptyset\right\}
\end{equation} 
and let $H^1_{t}(Q_T)$ be the completion of this space in $H^1(Q_T)$. We will  denote by $H^{\frac{1}{2}}_{t}(S_T)$ the subspace of $H^{\frac{1}{2}}(S_T)$ that consists of traces of $H^1_{t}(Q_T)$ functions.
We also introduce
\begin{equation}
C_{\diamond}(Q_T)=\left\{\varphi\in C^\infty(\overline{Q_T}): \supp\varphi\cap \left[S_T\cup\{T\}\times\dom \right]=\emptyset\right\},
\end{equation}
and its completion $H^1_{\diamond}(Q_T)$ in $H^1(Q_T)$.

Let by $\tau_\dom$ the boundary trace operator for the domain $\dom$. It is bounded between $H^1(\dom)$ and $H^{\frac{1}{2}}(\pr\dom)$. Let $\tau_{Q_T}$ be the boundary trace operator for $Q_T$ to the side boundary $S_T$. It is not hard to check that $\tau_{Q_T}$ is bounded from $L^2((0,T);H^1(\dom))$ to $L^2((0,T);H^{\frac{1}{2}}(\pr\dom))$.

\begin{dfn}\label{def-sol}
 We say that $u\in L^\infty(Q_T)$, $u\geq0$, is a weak solution of \eqref{eq} in $Q_T$ if it
satisfies the following conditions:
\begin{enumerate}
\item $\nabla (u^m)$ exists in the sense of distributions and $\nabla (u^m)\in L^2(Q_T)$;
\item for any test function $\varphi\in C_{\diamond}(Q_T)$ (or, equivalently, any $\varphi\in H^1_{\diamond}(Q_T)$)
\begin{equation} \int_{Q_T}\gamma\nabla\varphi\cdot\nabla (u^m)\dd t\dd x+\int_{Q_T}\lambda \varphi u^q\dd t\dd x-\int_{Q_T}\epsilon\partial_t\varphi\, u\dd t\dd x =\int_{Q_T}\varphi f\dd t\dd x;
\end{equation}
\item $\tau_{Q_T}(u^m)=\phi^m$.
\end{enumerate}
\end{dfn}

We will show in section \ref{forward} that
\begin{thm}\label{thm-forward}
If $\phi\in C(\overline{S_T})$, $\phi^m\in C(\overline{S_T})\cap C^{0,1}(S_T)$, $\phi\geq0$, and $\phi(0,x)=0$ for all $x\in\pr\dom$, and if $f\in L^\infty(Q_T)$, $f\geq0$, then there exists a unique weak solution $u$ of \eqref{eq} in $Q_T$. This solution satisfies the energy estimate
\begin{equation}
||u^m||_{L^2((0,T);H^1(\dom))}\leq C(1+T)^{\frac{1}{2}}\left( ||\phi^{m+q}||_{C^{0,1}(\dom)}+||\phi||_{C(\dom)} +||f||_{L^\infty(Q_T)}\right),
\end{equation}
with a constant $C>0$ that depends on $\dom$, $m$, $q$, and the upper and lower bounds of $\epsilon$, $\gamma$, and $\lambda$.
The solution $u$ satisfies the maximum principle
\begin{equation}
0\leq \sup_{Q_T}u \leq \sup_{S_T} \phi.
\end{equation}
If $\phi_1\leq\phi_2$, $f_1\leq f_2$ are as above and give rise to weak solutions $u_1$ and $u_2$, then
\begin{equation}
u_1\leq u_2.
\end{equation}
\end{thm}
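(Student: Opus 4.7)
The plan is to follow the Vázquez approximation scheme from \cite{V}, adapted to handle the additional (possibly singular) absorption term. For each $\delta\in(0,1)$, consider the regularized problem obtained by shifting the parabolic boundary data to $\phi+\delta$, the initial value to $\delta$, and (only when $q<1$) replacing $s\mapsto s^q$ by a smooth, globally Lipschitz, nondecreasing function $g_\delta$ that equals $s^q$ on $[\delta,\infty)$. Because the approximating solution will satisfy $u_\delta\geq\delta$, the diffusion coefficient $m u_\delta^{m-1}$ is bounded below and the equation becomes uniformly parabolic; classical quasilinear parabolic theory (Ladyzhenskaya--Solonnikov--Ural'tseva) therefore produces a unique smooth $u_\delta$. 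The weak maximum principle applied against the constant barriers $\delta$ and $\sup_{S_T}\phi+\delta$ yields both the positivity and the $L^\infty$ bound, and, since $s\mapsto s^m$ and $g_\delta$ are both nondecreasing, the standard parabolic comparison principle immediately gives $u_{1,\delta}\leq u_{2,\delta}$ whenever $\phi_1\leq\phi_2$ and $f_1\leq f_2$.

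The next step is to derive $\delta$-uniform a priori estimates. Choose a bounded $H^1(Q_T)$-extension $\Phi$ of $\phi^m$ vanishing on $\{t=0\}$ whose norm is controlled by $\|\phi^{m+q}\|_{C^{0,1}}+\|\phi\|_{C}$; test the weak formulation for $u_\delta$ with $u_\delta^m-\Phi$, use the identity $u_\delta\,\pr_t u_\delta^m=\tfrac{m}{m+1}\pr_t u_\delta^{m+1}$ to absorb the time derivative, and apply Young's inequality to the absorption and forcing terms. This yields exactly the stated energy bound, uniformly in $\delta$. The family $\{u_\delta\}$ is monotone (decreasing as $\delta\to 0$) by the regularized comparison, so it converges pointwise to some $u\geq 0$, and the uniform energy bound gives $u_\delta^m\rightharpoonup u^m$ weakly in $L^2((0,T);H^1(\dom))$ and strongly in every $L^p(Q_T)$, $p<\infty$. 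Passage to the limit in the weak formulation is then immediate (the term $g_\delta(u_\delta)\to u^q$ converges in $L^1(Q_T)$ by dominated convergence), while $\tau_{Q_T}(u^m)=\phi^m$ follows from weak continuity of the trace on $L^2((0,T);H^1(\dom))$.

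The delicate point is uniqueness and the comparison $u_1\leq u_2$ for \emph{arbitrary} weak solutions, which I would obtain by an Oleinik-type duality argument. Given two weak solutions with ordered data, write $u_1^m-u_2^m=a(t,x)(u_1-u_2)$ and $u_1^q-u_2^q=b(t,x)(u_1-u_2)$, with $a,b\geq 0$. For a given $h\in C_c^\infty(Q_T)$ with $h\geq 0$, consider the backward adjoint problem $-\epsilon\pr_t\psi_k-\nabla\cdot\bigl(\gamma(a_k+k^{-1})\nabla\psi_k\bigr)+\lambda b_k\psi_k=h$ on $Q_T$, with $\psi_k|_{S_T}=0$ and $\psi_k(T,\cdot)=0$, for smooth truncations $(a_k,b_k)$ of $(a,b)$. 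Standard theory yields $\psi_k\in H^1_\diamond(Q_T)$; using $\psi_k$ as a test function for the equation satisfied by $u_1-u_2$ and sending $k\to\infty$ gives $\int h(u_2-u_1)\geq 0$ for every $h\geq 0$, hence the comparison. The main obstacle is that $b$ is unbounded near $\{u_i=0\}$ when $q<1$ while $a$ degenerates on the same set; the commutator $\int\gamma(a-a_k)\nabla\psi_k\cdot\nabla(u_1^m-u_2^m)$ from the regularization must be absorbed into the natural energy $\|\sqrt{a_k}\,\nabla\psi_k\|_{L^2(Q_T)}^2$ of $\psi_k$ (obtained by testing the adjoint with $\psi_k$ itself), and it is precisely here that the hypothesis $m^{-1}<q<\sqrt{m}$ enters, balancing the degeneracy of the diffusion against the singularity of the absorption.
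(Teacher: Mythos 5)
Your existence construction, energy estimate, and comparison principle are essentially the paper's argument (regularize the nonlinearities away from $\{u=0\}$, shift the data by a small positive constant, invoke classical quasilinear parabolic theory, and pass to a monotone limit), and the duality scheme for uniqueness is also the right general strategy. The genuine gap is in how you treat the singular absorption term in the uniqueness step when $0<q<1$. You place the coefficient $b=(u_1^q-u_2^q)/(u_1-u_2)$ into the adjoint equation and assert that the resulting commutator can be absorbed using the hypothesis $m^{-1}<q<\sqrt{m}$. This cannot work as stated. First, that hypothesis plays no role in the forward problem: in the paper it is used only in the inverse problem, to order the exponents in the asymptotic expansion of the Dirichlet-to-Neumann map. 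Second, $b$ blows up like $u^{q-1}$ exactly where $a$ degenerates, and you give no mechanism for controlling the additional commutator $\int\lambda(b-b_k)\psi_k(u_1-u_2)$; moreover the first-order energy $\|\sqrt{a_k}\,\nabla\psi_k\|_{L^2}^2$ you propose does not even control the first commutator, for which one needs the second-order quantity $\int a_k[\nabla\cdot(\gamma\nabla\psi_k)]^2$, obtained by testing the adjoint equation with $\zeta\epsilon^{-1}\nabla\cdot(\gamma\nabla\psi_k)$ for a suitable increasing weight $\zeta$.

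The paper sidesteps the difficulty: the absorption term is kept out of the adjoint equation, so duality yields the $L^1$ estimate
\[
\int_\dom(u_1(t_0)-u_2(t_0))_+\,\dd x\le C\left(\int_0^{t_0}\!\int_\dom(u_2^q-u_1^q)_+\dd t\dd x+\int_0^{t_0}\!\int_\dom(f_1-f_2)_+\dd t\dd x\right).
\]
For $q\ge1$ Gronwall finishes. For $0<q<1$ one compares an arbitrary weak solution $\hat u$ not with another arbitrary solution but with the approximate solutions $u_k\ge 1/k$: multiplying by $e^{\omega t}$ with $\omega>k^{1-q}$ makes the right-hand side vanish except where $\hat u\ge u_k$, where the difference quotient of $s\mapsto s^q$ is bounded by $k^{1-q}$, so Gronwall gives $\hat u\le u_k$ and hence $\hat u\le u$; feeding this back into the $L^1$ estimate kills the singular term and forces $\hat u=u$. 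Without this device (or some equivalent use of the strict positivity of the approximations), your uniqueness argument does not close in the singular range $0<q<1$.
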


We have been unable to find a proof of this exact result in the available literature on the porous medium equation. For this reason, and also for the convenience of the reader, we provide our own proof. The methods needed are not new. We have followed the argument in \cite{V}, with some additional techniques from \cite{Be}.

 If $\psi$ is as in the above theorem and $u$ is the corresponding weak solution,  the Neumann data $\gamma\pr_\nu u^m|_{S_T}$  can be defined by 
\begin{equation}
\la \gamma\pr_\nu u^m|_{S_T}, \psi|_{S_T}\ra=\int_{Q_T}\left(\gamma \nabla\psi\cdot\nabla(u^m)+\lambda\psi u^q-\epsilon\pr_t\psi u \right)\dd t\dd x.
\end{equation}

\subsection{Main results}

We can  now define the Dirichlet-to-Neumann map
\begin{equation}
\Lambda_{\epsilon,\gamma,\lambda}^{PM} (\phi)=\gamma\pr_\nu u^m|_{S_T}\in \left(H^{\frac{1}{2}}_t(S_T)\right)'.
\end{equation}

Suppose  that we have two sets of coefficients, $\epsilon^{(i)}$, $\gamma^{(i)}$, $\lambda^{(i)}$, and $\epsilon^{(ii)}$, $\gamma^{(ii)}$, $\lambda^{(ii)}$, that are as above. The first main result of our paper is the following.
\begin{thm}\label{thm-inverse}
If $\Lambda_{\epsilon^{(i)},\gamma^{(i)},\lambda^{(i)}}^{PM} (\phi)=\Lambda_{\epsilon^{(ii)},\gamma^{(ii)},\lambda^{(ii)}}^{PM} (\phi)$, for all $\phi\in C(\overline{S_T})$,  such that $\phi^m\in C(\overline{S_T})\cap C^{0,1}_{loc}(S_T)$, $\phi\geq0$, and $\phi(0,x)=0$ for all $x\in\pr\dom$, then $\gamma^{(i)}=\gamma^{(ii)}$, $\epsilon^{(i)}=\epsilon^{(ii)}$, and $\lambda^{(i)}=\lambda^{(ii)}$.
\end{thm}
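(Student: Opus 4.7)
The plan is to linearize the nonlinear Dirichlet-to-Neumann map around carefully chosen strictly positive background solutions, reducing the problem to uniqueness for the coefficients of a linear, non-degenerate parabolic equation. A naive higher-order linearization at the trivial solution $u\equiv0$ is obstructed by the non-smoothness of $u\mapsto u^m$ at the origin (for non-integer $m$) and by the compatibility-forced vanishing at $t=0$, which forces linearization around a non-trivial positive background instead.

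Concretely, I would first fix a boundary datum $\phi_0$ vanishing at $t=0$ but strictly positive and sufficiently large on $(t_0,T)\times\pr\dom$, chosen so that the corresponding solution $u_0$ is strictly positive on a sub-cylinder $Q_\ast=(t_1,T)\times\dom$ for some $t_1\in(t_0,T)$. Existence and the required pointwise lower bound follow from Theorem~\ref{thm-forward} together with comparison against explicit Barenblatt-type subsolutions of the porous medium equation, which provide a finite ``wetting time'' after which the support of $u_0$ exhausts $\dom$. Next, for smooth test data $\psi$ with $\psi(0,\cdot)=0$, set $\phi_s=\phi_0+s\psi$ and write the corresponding solution as $u_s=u_0+sv+\Ord(s^2)$ on $Q_\ast$. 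Differentiating at $s=0$, the first variation $v$ solves the linear parabolic equation
\begin{equation*}
\epsilon\,\pr_t v-\nabla\cdot\bigl(m\gamma\,u_0^{m-1}\,\nabla v\bigr)+q\lambda\,u_0^{q-1}\,v=0,
\end{equation*}
with $v|_{t=0}=0$ and $v|_{S_T}=\psi$. Differentiating the hypothesis $\Lambda^{PM}_{\epsilon^{(i)},\gamma^{(i)},\lambda^{(i)}}(\phi_s)=\Lambda^{PM}_{\epsilon^{(ii)},\gamma^{(ii)},\lambda^{(ii)}}(\phi_s)$ in $s$ at $s=0$ then yields equality of the corresponding linearized DN maps $\psi\mapsto m\gamma u_0^{m-1}\pr_\nu v|_{S_T}$ for the two coefficient sets.

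With the problem thus reduced to an inverse boundary value problem for a linear parabolic equation with smooth, non-degenerate coefficients, I would invoke standard uniqueness techniques (for instance Carleman-estimate-based arguments, or, after Laplace transform in time, a Calder\'on-type reduction to the elliptic problem exploiting Sylvester--Uhlmann complex geometric optics solutions since $n\geq 2$) and exploit the freedom to vary $\phi_0$ to obtain a rich family of linearized problems. The three coefficients separate cleanly under this variation because $\gamma$ enters the principal part multiplied by $u_0^{m-1}$, $\epsilon$ multiplies the time derivative with no $u_0$ weight, and $\lambda$ appears only in the zeroth order term through $u_0^{q-1}$. The main obstacles I anticipate are: (i) justifying the $\Ord(s^2)$ expansion of $u_s$ in an energy space compatible with the degenerate structure of the nonlinear PDE near the free boundary of $u_0$; (ii) handling the fact that the linearized DN map is directly accessible only on the subset of $S_T$ where $\phi_0>0$, so that some propagation-of-smallness or Runge-type density argument is needed to transfer information to all of $S_T$; and (iii) showing that the admissible family of backgrounds $u_0$ is rich enough to simultaneously disentangle $\epsilon$, $\gamma$, and $\lambda$ from the $u_0$-dependent factors that weight them in the linearized equation.
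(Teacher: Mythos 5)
Your strategy is genuinely different from the paper's, but it contains gaps that are not merely technical. The most serious is a circularity in the reduction: the linearized equation has principal part $m\gamma u_0^{m-1}$ and zeroth-order coefficient $q\lambda u_0^{q-1}$, where the background $u_0$ solves the \emph{nonlinear} problem and therefore depends on the unknown coefficients. Equality of the two nonlinear DN maps gives you equality of two linearized DN maps whose operators carry \emph{different} unknown weights $u_0^{(i)}$ and $u_0^{(ii)}$; this is not an inverse problem for a single known linear parabolic operator with two candidate coefficient sets, and nothing in the proposal breaks that circle. Second, the $\Ord(s^2)$ differentiability of $s\mapsto u_s$ is a genuine obstruction rather than a routine estimate: every admissible datum must satisfy $\phi(0,\cdot)=0$, so $u_0$ necessarily passes through the degenerate regime where $u_0^{m-1}\to0$ and, for $q<1$, $u_0^{q-1}\to\infty$; no differentiability of the solution map is known there, and restricting to a sub-cylinder $(t_1,T)\times\dom$ does not help because the state $u_s(t_1,\cdot)$, which serves as (unknown) initial data for the linearized problem, is produced by the degenerate evolution on $[0,t_1]$. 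Third, even granting the linearization, the resulting linear problem has a time-dependent principal coefficient and unknown data at $t=t_1$ on a finite observation window, so neither the Laplace-transform reduction (which needs time-independent coefficients) nor the standard Carleman/Bukhgeim--Klibanov machinery applies off the shelf. Finally, with the singular absorption $\lambda u^q$, $q<1$, positivity of $u_0$ on all of $\dom$ is threatened by dead-core formation, so Barenblatt comparison alone does not deliver the claimed sub-cylinder of strict positivity without further quantitative work.

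The paper sidesteps all of these issues by never linearizing the parabolic equation. After setting $v=u^m$ it forms $V(x)=\int_0^T(T-t)^\alpha v(t,x)\,\dd t$, which by the comparison principle and H\"older's inequality satisfies the \emph{elliptic} differential inequality $0\leq\nabla\cdot(\gamma\nabla V)\leq C_1\epsilon V^{1/m}+C_2\lambda V^{q/m}$. With boundary data $t^m h\,g(x)$ and $h\to\infty$ it establishes the expansion $V=hV_0+h^{1/m}V_t+h^{q/m}V_a+\Ord(h^{\sigma^2})$, where $V_0$ is $\gamma$-harmonic: the leading term yields the Calder\'on DN map and hence $\gamma$ (so that the weight $V_0$ is subsequently the \emph{same} for both coefficient sets, which is exactly what resolves the circularity you face); the next terms, separated by the distinct powers $h^{1/m-1}$ and $h^{q/m-1}$, yield $\int\epsilon V_0^{1/m}W$ and $\int\lambda V_0^{q/m}W$, and a secondary linearization $V_0=1+sH$ reduces these to density of products of harmonic functions. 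Only the comparison principle and elliptic estimates are used, so the differentiability of the solution map never enters.
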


In the case when the $\gamma$ coefficient is a priori known to be constant, we also have the following partial data result.
\begin{thm}\label{thm-partial}
 Let  $\sig\subset\pr\dom$ be open. If $\Lambda_{\epsilon^{(i)},1,\lambda^{(i)}}^{PM} (\phi)=\Lambda_{\epsilon^{(ii)},1,\lambda^{(ii)}}^{PM} (\phi)$, for all $\phi\in C(\overline{S_\infty})$,  such that $\supp\phi\subset[0,T]\times\sig$, $\phi^m\in C(\overline{S_\infty})\cap C^{0,1}_{loc}(S_\infty)$, $\phi\geq0$, and $\phi(0,x)=0$ for all $x\in\pr\dom$, then  $\epsilon^{(i)}=\epsilon^{(ii)}$ and $\lambda^{(i)}=\lambda^{(ii)}$.
\end{thm}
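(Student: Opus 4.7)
My strategy is to transcribe the equality of DN maps on $\sig_T:=[0,T]\times\sig$ into a bulk integral identity via the weak formulation of Definition \ref{def-sol}, then analyze small-data solutions asymptotically to reduce to integral identities involving a linear limit problem, and finally apply a Runge-type density argument to recover the coefficients pointwise.

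\textbf{Step 1 (integration identity).} Subtracting the weak formulations for the two coefficient triples and using the definition of the Neumann trace together with the assumption $\Lambda^{(1)}(\phi)=\Lambda^{(2)}(\phi)$, I would obtain that, for every admissible $\phi$ with $\supp\phi\subset\overline{\sig_T}$ and every $\psi\in H^1(Q_T)$ satisfying $\psi(T,\cdot)=0$ and whose boundary trace is supported in $\overline{\sig_T}$,
\begin{equation*}
\int_{Q_T}\nabla\psi\cdot\nabla(u_1^m-u_2^m)-\int_{Q_T}(\epsilon^{(1)}u_1-\epsilon^{(2)}u_2)\pr_t\psi+\int_{Q_T}(\lambda^{(1)}u_1^q-\lambda^{(2)}u_2^q)\psi=0,
\end{equation*}
as well as pointwise agreement of the Cauchy data of $u_1^m$ and $u_2^m$ on $S_T$.

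\textbf{Step 2 (small-data expansion).} I would then parametrize the Dirichlet data as $\phi=\tau^\alpha\chi$ (or as a multi-parameter superposition $\sum_k\tau_k^{\alpha_k}\chi_k$) with $\chi$ (resp.\ $\chi_k$) supported in $\sig_T$, the exponents $\alpha$ being chosen so that two of the three terms in \eqref{eq} balance under a corresponding time rescaling $t\mapsto\tau^\beta t$. By Theorem \ref{thm-forward} together with the maximum principle the solutions vanish uniformly as $\tau\to0$ and, after rescaling, admit an asymptotic expansion whose leading (and first subleading) profiles $\Phi_j$, $\Psi_j$ solve linear limit problems; the conditions $m^{-1}<q<\sqrt m$ should be exactly what makes these expansions close consistently and separate the $\epsilon$- and $\lambda$-dependent orders. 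Substituting the expansion into the identity of Step 1 and collecting orders in $\tau$ (or the mixed orders in $\tau_1,\tau_2,\dots$ arising from higher-order linearizations) yields identities of the form
\begin{equation*}
\int_{\widetilde Q}(\epsilon^{(1)}-\epsilon^{(2)})\,\Phi_1\Phi_2\,ds\,dx=0,\qquad \int_{\widetilde Q}(\lambda^{(1)}-\lambda^{(2)})\,\Psi_1\Psi_2\,ds\,dx=0,
\end{equation*}
where the $\Phi_j,\Psi_j$ are solutions of the linear limit equation with data in (the rescaled image of) $\sig_T$.

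\textbf{Step 3 (Runge approximation) and main obstacle.} A Runge-type density theorem for the linear limit equation on $\widetilde Q$, with boundary data supported in $\sig_T$, would imply that products $\Phi_1\Phi_2$ and $\Psi_1\Psi_2$ are dense in a sufficiently large subspace of $L^2(\widetilde Q)$ to conclude $\epsilon^{(1)}=\epsilon^{(2)}$ and $\lambda^{(1)}=\lambda^{(2)}$ pointwise. The principal difficulty lies in Step 2: \eqref{eq} is degenerate at $u=0$, the absorption $u^q$ is non-Lipschitz when $q<1$, and the three terms in the equation scale with three distinct powers of $\tau$, so that no single rescaling balances all of them. Making the small-data expansion rigorous (with uniform control of the error), identifying the correct linear limit problem to which Runge approximation applies, and disentangling the contributions of $\epsilon$ and $\lambda$ from the various orders is where the heart of the argument (and my main technical worry) would lie.
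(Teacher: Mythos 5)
There are two genuine gaps here, and they sit at the heart of the argument rather than at its periphery. First, your Step 2 sends the Dirichlet data to \emph{zero}, hoping that a rescaling produces a linear parabolic limit problem. But the porous medium equation is degenerate precisely at $u=0$: the linearization of $u\mapsto u^m$ vanishes there, so the ``linear limit equation'' you would obtain by balancing two of the three terms is itself degenerate, and neither its well-posedness nor the uniform error control for the expansion is available. The paper goes in the opposite direction: it takes data $\phi^m|_{S_T}=h\,t^m g(x)$ with $h\to\infty$, and --- crucially --- first applies the time-integral transform $V(x)=\int_0^T(T-t)^\alpha u^m(t,x)\,\dd t$. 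This converts the degenerate parabolic problem into an \emph{elliptic} differential inequality $0\le \nabla\cdot(\gamma\nabla V)\le C_1\epsilon V^{1/m}+C_2\lambda V^{q/m}$, whose leading order in $h$ is the harmonic equation; the sublinearity of the powers $1/m$ and $q/m<1$ (this is where $q<\sqrt m$ enters) is what makes the expansion close and separates the $\epsilon$- and $\lambda$-contributions at orders $h^{1/m-1}$ and $h^{q/m-1}$. Without some such device your expansion does not produce well-posed limit problems, and the condition $m^{-1}<q<\sqrt m$ plays no identifiable role in your scheme.

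Second, your Step 3 invokes ``a Runge-type density theorem'' for products of solutions of the limit equation with boundary data supported in $\sig$. For the full-data problem this is the classical completeness of products of harmonic functions, but with both factors constrained to vanish on $\gam=\pr\dom\setminus\overline\sig$ no such density theorem is known --- this is essentially the open local-data Calder\'on problem. What is actually available, and what the paper uses, is the linearized local result of Dos Santos Ferreira--Kenig--Sj\"ostrand--Uhlmann: one first constructs an auxiliary positive harmonic function $U_0$ vanishing on $\gam$, linearizes the identity \eqref{ii1} around it to reach $\int_\dom F\,UW\,\dd x=0$ with $F=(\epsilon^{(i)}-\epsilon^{(ii)})U_0^{1/m-1}$, and then runs the Segal--Bargmann/FBI-transform machinery (extended here from $L^\infty$ to $L^1$ weights, since $U_0^{1/m-1}$ is singular at $\gam$) to get vanishing of $F$ near $\sig$, followed by a propagation-along-a-curve argument using a genuine Runge lemma only for the \emph{global} step. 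Your proposal collapses this entire analytic core into a single density assertion that is not true in the generality you need. You correctly flag Step 2 as a worry, but the resolution is not a more careful rescaling --- it is a structurally different transform of the equation, together with the DKSU analysis in place of naive Runge approximation.
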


We give a proof of Theorem \ref{thm-inverse} in section \ref{inverse}. The main idea for the proof is to use two sucessive transformations of equation \eqref{eq}. The first is the change of function $v=u^m$, which gives us the equation
\begin{equation}
\epsilon\pr_t v^{\frac{1}{m}}-\nabla\cdot(\gamma\nabla v)+\lambda v^{\frac{q}{m}}=0.
\end{equation}
The second transformation involves the function
\begin{equation}
V(x)=\int_0^T(T-t)^\alpha v(t,x)\dd t,
\end{equation}
where $\alpha,T>0$ will be chosen in the course of the proof. Since the equation satisfied by $v$ is non-linear, we do not obtain a closed form PDE for $V$. It is however possible to show that $V$ satisfies a differential inequality of the form
\begin{equation}
0\leq\nabla\cdot(\gamma(x)\nabla V(x))\leq C_1\epsilon(x)(V(x))^{\frac{1}{m}}+C_2\lambda(x)(V(x))^{\frac{q}{m}}.
\end{equation}
If we choose Dirichlet boundary data so that $v|_{S_T}(t,x)=ht^m g(x)$, with $h$ a large parameter, we can deduce the first few terms in the asymptotic expansion of $V$ as $h\to\infty$. The Neumann data for each of these terms is determined by the Dirichlet-to-Neumann map, and we can use this information to separately show uniqueness for each of the $\epsilon$, $\gamma$, and $\lambda$ coefficients.

We prove Theorem \ref{thm-partial} in section \ref{partial}. The starting point for the argument is an integral identity derived in section \ref{inverse}. We reduce this integral identity to a problem similar to the linearized local Calder\'{o}n problem considered in \cite{DKSU}, but with an $L^1$ coefficient function rather than the $L^\infty$ one. The result follows by adapting the original argument to this new situation.

Finally, we would like to remark on the importance of the partial data case. Of course, one reason to consider this case is that in possible real life applications only partial data may be practically available. Here however there is an additional theoretical reason, namely that in the partial data case the equation remains in a degenerate regime for the entire time of observation, since the solution must remain zero on a part of the boundary. This shows that our method does not work by moving the equation to a non-degenerate regime, but rather that it can handle the equation even in situations that are not covered by previous works.

\section{The forward problem}\label{forward}

The main idea for the proof of existence of weak solutions to \eqref{eq} comes from the intuition that the maximum principle should hold for such solutions. If the initial data, the Dirichlet boundary data, and the source term are strictly positive (an bounded), and the maximum/minimum principle holds, then the solution would not take values close to zero and infinity. When this is the case, we can modify the equation so as to remove its singularities, while keeping the same solution. This is the same method used in the proof of \cite[Theorem 5.14]{V}.

\subsection{Existence of solutions}

We will now make the above  rigorous. Let $a_k(x,z)$, $b_k(x,z)$, $k=1,2,3,\ldots$,  be smooth, bounded functions, with the $a_k$ also having strictly positive lower bounds, and such that when
\begin{equation}\frac{1}{k}\leq z\leq \sup_{S_T}\phi+T\sup_{Q_T} f+\frac{1+T}{k},
\end{equation}
we have
\begin{equation}
a_k(x,z)=m\gamma(x)z^{m-1}, \quad b_k(x,z)=\lambda(x)z^q.
\end{equation}
We also choose functions $f_k\in C^\infty(Q_T)$ such that
\begin{equation}
f(t,x)\leq f_{k+1}(t,x)\leq f_k(t,x)\leq f(t,x)+\frac{1}{k}.
\end{equation}
Let now $u_k$ be the solutions of 
\begin{equation}\label{eq-k}
\left\{\begin{array}{l}\epsilon\pr_t u_k -\nabla\cdot\left(a_k(x,u_k)\nabla u_k)\right)+b_k(x,u_k)=f_k,\\[5pt] u_k(0,x)=\frac{1}{k},\quad u_k|_{[0,T)\times\pr\dom}=\phi+\frac{1}{k}.\end{array}\right.
\end{equation}
The problem \eqref{eq-k} has a solution $u_k \in C^{1,2}(Q_T)\cap C(\overline{Q_T})$ (see \cite{La}, or \cite{Li}), which furthermore satisfies the maximum principle
\begin{equation}\label{mp-k}
\frac{1}{k}\leq u_k(t,x)\leq\frac{1}{k}+\sup_{S_T}\phi+t\sup_{Q_T}f_k.
\end{equation}
It follows that $u_k$ is also a solution to 
\begin{equation}\label{eq-k-2}
\left\{\begin{array}{l}\epsilon\pr_t u_k -\nabla\cdot(\gamma\nabla u^m_k)+\lambda u_k^q=f_k,\\[5pt] u_k(0,x)=\frac{1}{k},\quad u_k|_{[0,T)\times\pr\dom}=\phi+\frac{1}{k}.\end{array}\right.
\end{equation}

By \eqref{mp-k}, $u_{k}$ is also a solution to $\epsilon\pr_t u_{k} -\nabla\cdot\left(a_{k+1}(x,u_k,\nabla u_k)\right)=f_k$. By the comparison principle (see \cite[Theorem 9.7]{Li}), we have that
\begin{equation}
0\leq u_{k+1}\leq u_{k},\quad\forall k=1,2\ldots.
\end{equation}
Let $u$ be the pointwise limit
\begin{equation}
u(t,x)=\lim_{k\to\infty} u_k(t,x).
\end{equation}
It is clear that $u\in L^\infty(\dom)$ and
\begin{equation}
0\leq u(t,x)\leq \sup_{S_T}\phi+t\sup_{Q_T} f.
\end{equation}
A simple application of the monotone convergence theorem gives that in $L^2(Q_T)$ norm we have $u_k\to u$, $u_k^m\to u^m$, and $u_k^q\to u^q$.

Let $\tilde\phi:Q_T\to [0,\infty)$ be a smooth extension of the Dirichlet data, such that we still have $\tilde\phi(0,x)=0$ for all $x\in\dom$ and
\begin{equation}
||\tilde\phi||_{W^{1,\infty}(Q_T)}\leq C||\phi||_{C^{0,1}(S_T)},
\end{equation}
with a constant $C>0$.
Let 
\begin{equation}
\eta_k=u_k^m-\left(\tilde\phi+\frac{1}{k}\right)^m,
\end{equation}
which is zero on $S_T$. We will multiply \eqref{eq-k-2} by $\eta_k$ and integrate over $Q_T$. Note that
\begin{multline}
-\int_{Q_T}\eta_k\nabla(\gamma\nabla u_k^m)\dd t\dd x\\[5pt]
=\int_{Q_T}\gamma|\nabla u_k^m|^2\dd t\dd x
-\int_{Q_T}\gamma\nabla \left(\tilde\phi+\frac{1}{k}\right)^m\cdot\nabla u_k^m\dd t\dd x,
\end{multline}
and
\begin{multline}
\int_{Q_T}\epsilon\pr_t u_k\eta_k\dd t\dd x
=\int_{\dom}\frac{\epsilon}{m+1}u_k^{m+1}(T)\dd x\\[5pt]
-\int_{\dom}\epsilon u_k(T)\left(\tilde\phi(T)+\frac{1}{k}\right)^m\dd x
+\int_{Q_T}\epsilon u_k\pr_t\left(\tilde\phi+\frac{1}{k}\right)^m\dd t\dd x.
\end{multline}
It follows that
\begin{multline}
\int_{Q_T}\gamma|\nabla u_k^m|^2\dd t\dd x +\int_{Q_T}\lambda u_k^{m+q}\dd t\dd x
+\int_{\dom}\frac{\epsilon}{m+1}u_k^{m+1}(T)\dd x\\[5pt]
=\int_{Q_T}\gamma\nabla \left(\tilde\phi+\frac{1}{k}\right)^m\cdot\nabla u_k^m\dd t\dd x
+\int_{\dom}\epsilon u_k(T)\left(\tilde\phi(T)+\frac{1}{k}\right)^m\dd x\\[5pt]
-\int_{Q_T}\epsilon u_k\pr_t\left(\tilde\phi+\frac{1}{k}\right)^m\dd t\dd x
+\int_{Q_T}f_k\left[u_k^m-\left(\tilde\phi+\frac{1}{k}\right)^m\right]\\[5pt]
+\int_{Q_T}\lambda u_k^q\left(\tilde\phi+\frac{1}{k}\right)^m\dd t\dd x.
\end{multline}
After straightforward estimates we obtain that
\begin{multline}
\int_{Q_T}|\nabla u_k^m|^2\dd t\dd x<C'(1+T)\Bigg(\left\Vert\left(\phi+\frac{1}{k}\right)^m\right\Vert_{C^{0,1}(S_T)}^{2}
+\left\Vert\left(\phi+\frac{1}{k}\right)^m\right\Vert_{C(S_T)}\\[5pt] +\left\Vert\left(\phi+\frac{1}{k}\right)^{m+q}\right\Vert_{C(S_T)}+\left\Vert\phi+\frac{1}{k}\right\Vert_{C(S_T)}^2 +\left\Vert f+\frac{1}{k}\right\Vert_{L^\infty(Q_T)}^2 \Bigg),
\end{multline}
where $C'>0$ is a constant independent of $k$ and $T$.

It follows that (a subsequence of) $\nabla u_k^m$ converges weakly in $L^2(Q_T)$ to a limit $U$. It is easy to see that $U=\nabla u^m$ in the sense of distributions.
Since $\tau_{Q_T}(u_k^m)(t)\to \phi^m$ in $L^2((0,T); H^{\frac{1}{2}}(\pr\dom))$ by construction, and at the same time $\tau_{Q_T}(u_k^m)\rightharpoonup \tau_{Q_T}(u^m)$, it follows that
\begin{equation}
\tau_{Q_T}(u^m)=\phi^m.
\end{equation}
Therefore $u$ is a weak solution of \eqref{eq} in $Q_T$.

\subsection{Uniqueness of solutions}

Before proving the energy estimate and the comparison principle part of Theorem \ref{thm-forward}, we first need to prove the uniqueness of weak solutions. Once uniqueness has been established, we can deduce the desired properties from the corresponding ones that hold for $u_k$, since we would then know that each weak solution can be obtained as a limit of such approximate soultions.
The following argument uses ideas from  the proofs of \cite[Theorem 6.5, Theorem 6.6]{V} and from \cite[Section 3]{Be}.

Suppose $u_1$, $u_2$ are both weak solutions of \eqref{eq} in $Q_T$, with possibly different boundary data $\phi_1\leq\phi_2$ and source terms $f_1$ and $f_2$. Then for any $\varphi\in C_\diamond(Q_T)\cap C^\infty(Q_T)$ we have
\begin{multline}\label{eq-2-sol}
-\int_{Q_T}\epsilon \pr_t\varphi(u_1-u_2)\dd t\dd x+\int_{Q_T}\lambda\varphi(u_1^q-u_2^q)\dd t\dd x\\[5pt]
\leq\int_{Q_T} \nabla\cdot(\gamma\nabla\varphi) (u_1^m-u_2^m)\dd t\dd x
+\int_{Q_T}\varphi (f_1-f_2)\dd t\dd x.
\end{multline}
Let 
\begin{equation}
a(t,x)=\left\{\begin{array}{l}\frac{u_1^m(t,x)-u_2^m(t,x)}{u_1-u_2},\quad\text{if }u_1(t,x)\neq u_2(t,x),\\[5pt] 0,\quad\text{if }u_1(t,x)=u_2(t,x).\end{array}\right.
\end{equation}
The function $a$ is continuous, but may not be smooth. For $k=(k_1,k_2)\in\mathbb{N}^2$, let $a_k\in C^\infty(Q_T)$ be such that 
\begin{equation}
\frac{1}{k_1}\leq a_k\leq k_2,
\end{equation}
and $a_k\to a$.

 Let $\theta\in C^\infty_0(Q_T)$, $\theta\geq 0$ be an arbitrary function and $\varphi_k$ be the unique smooth solution of the backwards in time linear parabolic problem in $Q_T$
\begin{equation}\label{eq-phi}
\left\{
\begin{array}{l} \epsilon\pr_t\varphi_k+a_k\nabla\cdot(\gamma\nabla\varphi_k)+\theta=0,\\[5pt]
\varphi_k(T,x)=0,\;\forall x\in\dom,\quad \varphi_k|_{S_T}=0.
\end{array}
\right.
\end{equation}
By the maximum principle we have that $\varphi_k\geq0$.
Using $\varphi_k$ as a test function in \eqref{eq-2-sol} we get
\begin{multline}\label{eq-pair}
\int_{Q_T}\theta (u_1-u_2)\dd t\dd x+\int_{Q_T}\lambda\varphi_k(u_1^q-u_2^q)\dd t\dd x\\[5pt]
\leq\int_{Q_T}(a-a_k)\nabla\cdot(\gamma\nabla\varphi_k) (u_1-u_2)\dd t\dd x
+\int_{Q_T}\varphi_k (f_1-f_2)\dd t\dd x.
\end{multline}

Let 
\begin{equation}
J_k=\int_{Q_T}|u_1-u_2|\,|a-a_k|\,|\nabla\cdot(\gamma\nabla\varphi_k)|\dd t\dd x
\end{equation}
and note immediately that
\begin{equation}\label{J-estimate}
J_k\leq\left( \int_{Q_T}a_k[\nabla\cdot(\gamma\nabla\varphi_k)]^2\dd t\dd x  \right)^{\frac{1}{2}}\left(  \int_{Q_T}|u_1-u_2|^2\frac{|a-a_k|^2}{a_k}\dd t\dd x \right)^{\frac{1}{2}}.
\end{equation}
We will control each of the two factors on the right hand side separately.

Let $\zeta:[0,T]\to[\frac{1}{2},1]$ be a smooth function such that $\zeta'(t)\geq c>0$. We multiply \eqref{eq-phi} by $\frac{\zeta}{\epsilon} \nabla\cdot(\gamma\nabla\varphi_k)$ and integrate to obtain
\begin{equation}
\int_{Q_T}\pr_t\varphi_k\zeta\nabla\cdot(\gamma\nabla\varphi_k)\dd t\dd x
+\int_{Q_T}\frac{\zeta}{\epsilon} a_k[\nabla\cdot(\gamma\nabla\varphi_k)]^2\dd t \dd x
+\int_{Q_T}\zeta\frac{\theta}{\epsilon}\nabla\cdot(\gamma\nabla\varphi_k)\dd t\dd x =0.
\end{equation}
We have that
\begin{multline}
\int_{Q_T}\pr_t\varphi_k\zeta\nabla\cdot(\gamma\nabla\varphi_k)\dd t\dd x
=-\int_{Q_T} \zeta\gamma\nabla\varphi_k\cdot\nabla(\pr_t\varphi_k)\dd t\dd x\\[5pt]
=-\frac{1}{2}\int_{Q_T}\zeta\gamma\pr_t(\nabla\varphi_k)^2\dd t\dd x
\geq \frac{1}{2}\int_{Q_T}\zeta'\gamma (\nabla\varphi_k)^2\dd t\dd x.
\end{multline}
Then
\begin{equation}
\frac{1}{2}\int_{Q_T}\zeta'\gamma |\nabla\varphi_k|^2\dd t\dd x+
\int_{Q_T}\frac{\zeta}{\epsilon} a_k[\nabla\cdot(\gamma\nabla\varphi_k)]^2\dd t \dd x
\leq \int_{Q_T}\zeta\gamma\nabla\frac{\theta}{\epsilon}\cdot\nabla\varphi_k\dd t\dd x.
\end{equation}
Estimating
\begin{equation}
\int_{Q_T}\zeta\gamma\nabla\frac{\theta}{\epsilon}\cdot\nabla\varphi_k\dd t\dd x
\leq \frac{c}{4}\int_{Q_T}\gamma |\nabla\varphi_k|^2\dd t\dd x+\frac{1}{c}\int_{Q_T}\gamma |\nabla\frac{\theta}{\epsilon}|^2\dd t\dd x,
\end{equation}
we conclude that
\begin{equation}\label{phi-bound}
\int_{Q_T}\gamma |\nabla\varphi_k|^2\dd t\dd x+
\int_{Q_T} a_k[\nabla\cdot(\gamma\nabla\varphi_k)]^2\dd t \dd x
\leq C \int_{Q_T}\gamma |\nabla\frac{\theta}{\epsilon}|^2\dd t\dd x,
\end{equation}
with a constant $C>0$ which do not depend on $k$.

The estimate for  the second factor on the right hand side of \eqref{J-estimate} relies only on the strategy for approximating $a$ by $a_k$. There is no difference at all between our case here and the argument in \cite{V}, so we quote the result
\begin{equation}
\int_{Q_T}|u_1-u_2|^2\frac{|a-a_k|^2}{a_k}\dd t\dd x\leq \frac{C}{k_1}.
\end{equation}
It follows that in the limit $J_k\to0$.

Let $t_0\in(0,T)$, $0\leq\psi(x)\leq1$ be a smooth function, and $\rho_\eta$, $\eta>0$, be a standard mollifier. We set
\begin{equation}
\theta(t,x)=\epsilon(x)\psi(x)\rho_\eta(t-t_0).
\end{equation} 
Then 
\begin{equation}
\varphi(t,x)=\int_t^T\rho_\eta(s-t_0)\dd s
\end{equation}
is a supersolution for \eqref{eq-phi}, so for all $k$ we have that $\varphi_k\leq\varphi$.

Taking the limit in \eqref{eq-pair}, we obtain 
\begin{multline}
\int_{Q_T} \epsilon\psi\rho_\eta(u_1-u_2)\dd t\dd x\leq  \int_{Q_T}\lambda(u_2^q-u_1^q)_+\left(\int_t^T\rho_\eta(s-t_0)\dd s\right)\dd t\dd x\\[5pt]
+\int_{Q_T} (f_1-f_2)_+\left(\int_t^T\rho_\eta(s-t_0)\dd s\right)\dd t\dd x.
\end{multline}
Taking the $\eta\to0$ limit, we get
\begin{equation}
\int_\dom \epsilon\psi (u_1(t_0)-u_2(t_0))_+\dd x\leq \int_0^{t_0}\int_{\dom}\lambda(u_2^q-u_1^q)_+\dd t\dd x
+\int_0^{t_0}\int_{\dom}(f_1-f_2)_+\dd t\dd x,
\end{equation}
which easily gives that
\begin{equation}\label{L1-estimate}
\int_\dom  (u_1(t_0)-u_2(t_0))_+\dd x\leq C\left(\int_0^{t_0}\int_{\dom}(u_2^q-u_1^q)_+\dd t\dd x
+\int_0^{t_0}\int_{\dom}(f_1-f_2)_+\dd t\dd x\right).
\end{equation}

We need to distinguish two cases. The first is when $q\geq1$. In this case we can from the start assume that $\phi_1=\phi_2$ and $f_1=f_2$. Note that both $u_1$ and $u_2$ are bounded, so there exists $M>0$ such that
\begin{equation}
(u_2^q-u_1^q)_+\leq M (u_2-u_1)_+.
\end{equation}
Then
\begin{multline}
\int_\dom  (u_1(t_0)-u_2(t_0))_+\dd x\leq C\int_0^{t_0}\int_{\dom}(u_2(t)-u_1(t))_+\dd t\dd x\\[5pt]
\leq C\int_0^{t_0}\int_0^t\int_\dom(u_1(s)-u_2(s))_+\dd s\dd t\dd x\\[5pt]
\leq Ct_0  \int_0^{t_0}\int_{\dom}(u_1(s)-u_2(s))_+\dd s\dd x.
\end{multline}
Gronwall's inequality implies that
\begin{equation}
\int_\dom  (u_1(t_0)-u_2(t_0))_+\dd x=0,\quad\forall t_0\in(0,T),
\end{equation}
and by exchanging the roles of $u_1$ and $u_2$ we conclude that $u_1=u_2$ everywhere.


The second case is when $0<q<1$.  Here we would like to take $u_1=\hat u$ to be any weak solution of \eqref{eq} in $Q_T$, with boundary data $\phi$ and source term $f$, and $u_2$ to be the approximate solution $u_k$ constructed above in the existence step. There is a slight difference to the assumtions above since $u_k$ has positive initial data $\frac{1}{k}$. Going over the argument  with this in mind quickly shows that an additional term of the form
\begin{equation}
\int_\dom (-u_k(0,x))_+\dd x
\end{equation}
should appear on the right hand side of \eqref{L1-estimate}. Since this term is actually zero, we may continue without modifying anything. 

There is another modification we wish to make. Let $\omega>0$ and let $u_1= e^{\omega t}\hat u$, $u_2= e^{\omega t}u_k$. It is easy to see that $u_1$ satisfies the equation
\begin{equation}
\pr_t u_1-\nabla\cdot(\gamma\nabla u_1)+\lambda u_1^q-\omega u_1=e^{\omega t}f
\end{equation}
in the weak sense, with a similar situation holding for $u_2$. 
If we apply \eqref{L1-estimate} to these choices of $u_1$ and $u_2$ we  have
\begin{multline}
e^{\omega t_0}\int_\dom (\hat u(t_0)-u_k(t_0))_+\leq
C\left(\int_0^{t_0}\int_\dom e^{\omega t}(u_k^q-\hat u^q-\omega(u_k-\hat u))_+\dd t\dd x\right.\\[5pt]\left.
+\int_0^{t_0}\int_\dom e^{\omega t}(f-f_k)_+\dd t\dd x\right).
\end{multline}
The last term is zero. If
\begin{equation}\label{plus-condition}
u_k^q(t,x)-\hat u^q(t,x)-\omega(u_k(t,x)-\hat u(t,x))\geq0,
\end{equation}
and $u_k(t,x)\geq \hat u(t,x)$, then we must have that 
\begin{equation}
\omega\leq\frac{u_k^q(t,x)-\hat u^q(t,x)}{u_k(t,x)-\hat u(t,x)}\leq(u_k(t,x)-\hat u(t,x))^{q-1}
\leq k^{1-q}.
\end{equation}
If we choose $\omega>k^{1-q}$, then we see that \eqref{plus-condition} can only hold when $\hat u(t,x)\geq u_k(t,x)$. In this case we have
\begin{equation}
e^{\omega t_0}\int_\dom (\hat u(t_0)-u_k(t_0))_+\leq C\omega\int_0^{t_0}\int_\dom e^{\omega t}(\hat u-u_k)_+\dd t\dd x.
\end{equation}
By Gronwall's inequality we conclude that $\hat u\leq u_k$, and therefore $\hat u\leq u$, where $u$ is the solution constructed in the previous subsection.

Returning to \eqref{L1-estimate} we get that
\begin{equation}
\int_\dom (u(t_0)-\hat u(t_0))\dd x\leq C\int_0^{t_0}\int_\dom (\hat u^q- u^q)_+\dd t\dd x=0,
\end{equation}
so $u=\hat u$.

\subsection{Energy inequality and maximum principle}

Once uniqueness of solutions has been established, we can derive properties of the weak solutions of \eqref{eq} from properties of the approximate solutions $u_k$.

The energy inequality follows easily once we note that 
\begin{multline}
||\nabla u^m||_{L^2(Q_T)}\leq \liminf_{k\to\infty} ||\nabla u_k^m||_{L^2(Q_T)}\\[5pt]
\leq C(1+T)^{\frac{1}{2}}\left(||\phi^{m+q}||_{C^{0,1}(S_T)}+||\phi||_{C(S_T)} + ||f||_{L^\infty(Q_T)}\right).
\end{multline}

Regarding the comparison principle, suppose we have boundary data and sources $\phi_1\leq\phi_2$, $f_1\leq f_2$, with corresponding solutions $u_1$ and $u_2$. The approximate solutions, by the comparison principle for non-degenerate quasilinear elliptic equations (see \cite[Theorem 9.7]{Li})  must satisfy
\begin{equation}
u_{1,k}\leq u_{2,k},
\end{equation}
and this property is preserved in the limit.
This concludes the proof of Theorem \ref{thm-forward}. 

\section{The inverse problem with full data}\label{inverse}

In this section we give a proof of Theorem \ref{thm-inverse}.
We first reformulate the problem by performing the change of function $v(t,x)=u^m(t,x)$. The new function $v$ satisfies
\begin{equation}\label{eq-v}
\left\{\begin{array}{l}\epsilon(x)\pr_t v(t,x)^{\frac{1}{m}} -\nabla\cdot(\gamma(x)\nabla v(t,x))+\lambda v^{\frac{q}{m}}(t,x)=0,\\[5pt] v(0,x)=0,\quad v|_{S_T}=f(t,x),\quad v\geq0,\end{array}\right.
\end{equation}
where $f=\phi^m$.

The notion of weak solutions for \eqref{eq} we have introduced above naturally transforms into a notion of weak solutions for \eqref{eq-v}, in the space-time domain $Q_T$. The Dirichlet-to-Neumann map $\Lambda^{PM}_{\epsilon,\gamma,\lambda}$ uniquely determines the Dirichlet-to-Neumann map
\begin{equation}
\Lambda^v_{\epsilon,\gamma,\lambda}(f)=\gamma\pr_\nu v|_{S_T},
\end{equation}
associated to the equation \eqref{eq-v}. Here $f\in C(\overline{S_T})\cap C^{0,1}(S_T)$, $f\geq0$, and $f(0,x)=0$ for all $x\in\pr\dom$.

\subsection{Time-integral transform and basic estimates}
Let $T,\alpha>0$, $h>1$. We choose the boundary data to be of the form 
\begin{equation}
f(t,x)=t^m\,h\,g(x),\quad g\geq0.
\end{equation}
Later we will fix $\alpha$ and $T$ to particular values, so we will not emphasize in the following the dependence of various quantities on these. Let
\begin{equation}
V(x)=\int_0^T(T-t)^\alpha v(t,x)\dd t.
\end{equation}
Then
\begin{equation}
\nabla\cdot(\gamma\nabla V(x))=\N_t(x)+\N_a(x),
\end{equation}
where
\begin{equation}
\N_t(x)=\epsilon(x)\alpha\int_0^T(T-t)^{\alpha-1} v^{\frac{1}{m}}(t,x)\dd t,
\end{equation}
and
\begin{equation}
\N_a(x)=\lambda(x)\int_0^T (T-t)^\alpha v^{\frac{q}{m}}(t,x)\dd t.
\end{equation}
Since
\begin{equation}
\int_0^T(T-t)^\alpha t^m\dd t=T^{1+\alpha+m}\frac{\Gamma(1+\alpha)\Gamma(1+m)}{\Gamma(2+\alpha+m)},
\end{equation}
we have that
\begin{equation}
\left\{\begin{array}{l}\nabla\cdot(\gamma\nabla V)=\N_t+\N_a,\\[5pt] V|_{\pr\dom}=hT^{1+\alpha+m}\frac{\Gamma(1+\alpha)\Gamma(1+m)}{\Gamma(2+\alpha+m)}g.\end{array}\right.
\end{equation}
In connection to this equation we introduce the Dirichlet-to-Neumann map
\begin{equation}
\Lambda^h_{\epsilon,\gamma,\lambda}(g)=h^{-1}\gamma\pr_\nu V|_{\pr\dom},
\end{equation}
which is determined by $\Lambda^v_{\epsilon,\gamma,\lambda}$ and hence also by $\Lambda^{PM}_{\epsilon,\gamma,\lambda}$.

By H\"older's inequality we have
\begin{multline}
0\leq\N_t(x)=\epsilon(x)\alpha\int_0^T (T-t)^{\frac{\alpha-m'}{m'}}\left[(T-t)^\alpha v\right]^{\frac{1}{m}}\dd t\\[5pt]
\leq\epsilon(x)\alpha\left(\int_0^T(T-t)^{\alpha-m'}\dd t  \right)^{\frac{1}{m'}}\left( V(x)\right)^{\frac{1}{m}}\\[5pt]
=T^{\frac{\alpha}{m'}-\frac{1}{m}}\frac{\alpha}{(\alpha-m'+1)^{\frac{1}{m'}}}\epsilon(x)\left( V(x)\right)^{\frac{1}{m}}.
\end{multline}
Similarly
\begin{equation}
0\leq\N_a(x)\leq T^{\alpha+\frac{m}{m-q}}\left(\alpha\left(1-\frac{q}{m}\right)+1   \right)^{-\frac{m}{m-q}}\lambda(x)(V(x))^{\frac{q}{m}}.
\end{equation}
Note that the above can only work if $\alpha>m'-1=\frac{1}{m-1}$, which we will assume from now on.

Then (choose $p>n$) we get
\begin{equation}
||\N_t||_{L^\infty(\dom)}\leq CT^{\frac{\alpha}{m'}-\frac{1}{m}}|| V||_{W^{1,p}(\dom)}^{\frac{1}{m}}
\end{equation}
and
\begin{equation}
||\N_a||_{L^\infty(\dom)}\leq CT^{\alpha+\frac{m}{m-q}}|| V||_{W^{1,p}(\dom)}^{\frac{q}{m}}.
\end{equation}
By elliptic estimates we have
\begin{multline}
||V||_{W^{2,p}(\dom)}\leq C\bigg( hT^{1+\alpha+m}||g||_{W^{2-\frac{1}{p},p}(\dom)}+T^{\frac{\alpha}{m'}-\frac{1}{m}}|| V||_{W^{1,p}(\dom)}^{\frac{1}{m}}\\[5pt]+T^{\alpha+\frac{m}{m-q}}|| V||_{W^{1,p}(\dom)}^{\frac{q}{m}}\bigg),
\end{multline}
so, since $h>1$, 
\begin{multline}
\max (||V||_{W^{2,p}(\dom)},h)\\[5pt]\leq
C\bigg(hT^{1+\alpha+m}||g||_{W^{2-\frac{1}{p},p}(\dom)}+h+T^{\frac{\alpha}{m'}-\frac{1}{m}}\left[\max(|| V||_{W^{1,p}(\dom)},h)\right]^{\frac{1}{m}}\\[5pt] +T^{\alpha+\frac{m}{m-q}}\left[\max(|| V||_{W^{1,p}(\dom)},h)\right]^{\frac{q}{m}}   \bigg)\\[5pt]
 \leq C\bigg(hT^{1+\alpha+m}||g||_{W^{2-\frac{1}{p},p}(\dom)}+h+(T^{\frac{\alpha}{m'}-\frac{1}{m}}+T^{\alpha+\frac{m}{m-q}})\left[\max(|| V||_{W^{1,p}(\dom)},h)\right]    \bigg).
\end{multline}
Since $\frac{\alpha}{m'}-\frac{1}{m}, \alpha+\frac{m}{m-q}>0$, we will choose $T$ small enough to be able to absorb the $T^{\frac{\alpha}{m'}-\frac{1}{m}}+T^{\alpha+\frac{m}{m-q}}$ term into the left hand side. Then, supressing explicit dependence on $T$, we have
\begin{equation}
||V||_{W^{2,p}(\dom)}\leq hC\left( ||g||_{W^{2-\frac{1}{p},p}(\dom)} +1\right).
\end{equation}

\subsection{Asymptotic expansion to first order in $h$}

We make the Ansatz
\begin{equation}
V(x)=h \frac{T^{1+\alpha+m}\Gamma(1+\alpha)\Gamma(1+m)}{\Gamma(2+\alpha+m)}V_0(x)+R_1(x),
\end{equation}
where
\begin{equation}
\left\{\begin{array}{l}\nabla\cdot(\gamma\nabla V_0)=0,\\[5pt] V_0|_{\pr\dom}=g.\end{array}\right.
\end{equation}
Then $R_1$ must satisfy
\begin{equation}
\left\{\begin{array}{l}\nabla\cdot(\gamma\nabla R_1)=\N_t+\N_a,\\[5pt] R_1|_{\pr\dom}=0.\end{array}\right.
\end{equation}
Let $\sigma=\frac{1}{m}\max(1,q)<1$. We have the estimate
\begin{equation}
||R_1||_{W^{2,p}(\dom)}\leq C h^{\sigma}\left( ||g||_{W^{2-\frac{1}{p},p}(\dom)} +1\right)^{\sigma}.
\end{equation}

The consequence for the Dirichlet-to-Neumann map is that as $h\to\infty$
\begin{equation}
\Lambda^h_{\epsilon,\gamma,\lambda}(g)=\frac{T^{1+\alpha+m}\Gamma(1+\alpha)\Gamma(1+m)}{\Gamma(2+\alpha+m)}\gamma\pr_\nu V_0|_{\pr\dom}+\Ord(h^{1-\sigma}).
\end{equation}
It follows that the DN map 
\begin{equation}
\Lambda_\gamma(g)=\gamma\pr_\nu V_0|_{\pr\dom},
\end{equation}
which is the DN map for the original Calder\'on problem, is determined by $\Lambda^h_{\epsilon,\gamma}$. By \cite{Na} in the $n=2$ case or \cite{SU} in the $n\geq3$ case,  we now have uniqueness for $\gamma$.

Before moving on, note that by the maximum/minimum principle we have $V_0\geq0$ and $R_1\leq 0$. For our convenience below we introduce here
\begin{equation}
v_0(t,x)=ht^mV_0(x),
\end{equation}
which is such that
\begin{equation}
\int_0^T (T-t)^\alpha v_0(t,x)\dd t=h\frac{T^{1+\alpha+m}\Gamma(1+\alpha)\Gamma(1+m)}{\Gamma(2+\alpha+m)} V_0(x).
\end{equation}

\subsection{Asymptotic expansion to second order in $h$}

Here we refine the Ansatz for $V$ to
\begin{equation}
V(x)=h V_0(x)+h^{\frac{1}{m}}V_t(x)+h^{\frac{q}{m}}V_a(x)+R_2(x),
\end{equation}
where $V_0$ is as above and
\begin{equation}
\left\{\begin{array}{l}\nabla\cdot(\gamma\nabla V_t)=h^{-\frac{1}{m}}\N_{0t},\\[5pt] V_t|_{\pr\dom}=0,\end{array}\right.
\end{equation}
\begin{equation}
\left\{\begin{array}{l}\nabla\cdot(\gamma\nabla V_a)=h^{-\frac{1}{m}}\N_{0a},\\[5pt] V_a|_{\pr\dom}=0,\end{array}\right.
\end{equation}
with
\begin{equation}
\N_{0t}(x)=\alpha\epsilon(x)\int_0^T(T-t)^{\alpha-1}v_0^{\frac{1}{m}}(t,x)\dd t
=h^{\frac{1}{m}}\epsilon(x)\frac{T^{\alpha+1}}{\alpha+1}V_0^{\frac{1}{m}}(x)
\end{equation}
and
\begin{equation}
\N_{0a}(x)=\lambda(x)\int_0^T(T-t)^{\alpha}v_0^{\frac{q}{m}}(t,x)\dd t
=h^{\frac{q}{m}}\lambda(x)\frac{T^{\alpha+1+q}}{\alpha+1+q}V_0^{\frac{q}{m}}(x).
\end{equation}
Clearly, $V_t$ and $V_a$  are independent of $h$ and, by the maximum principle, $V_t, V_a\leq0$.

The remainder term $R_2$ must satisfy
\begin{equation}
\left\{\begin{array}{l}\nabla\cdot(\gamma\nabla R_2)=(\N_t-\N_{0t})+(\N_a-\N_{0a}),\\[5pt] R_2|_{\pr\dom}=0.\end{array}\right.
\end{equation}
Note that
\begin{equation}
\epsilon\pr_tv_0^\frac{1}{m}-\nabla\cdot(\gamma\nabla v_0)=h^{\frac{1}{m}}\epsilon V_0^{\frac{1}{m}}(x)\geq0,\quad v_0|_{S_T}=v|_{S_T},
\end{equation}
so $v_0$ is a supersolution and therefore we have that $v_0\geq v$. It follows then that
\begin{equation}
\N_{0t}\geq \N_t,\quad \N_{0a}\geq \N_a.
\end{equation}

Using the same H\"older inequality trick applied above
\begin{multline}
0\leq \N_{0t}-\N_t=\alpha\epsilon\int_0^T(T-t)^{\alpha-1}(v_0^{\frac{1}{m}}-v^{\frac{1}{m}})\dd t\\[5pt]
\leq \alpha\epsilon\int_0^T(T-t)^{\alpha-1} (v_0-v)^{\frac{1}{m}}\dd t\\[5pt]
\leq  T^{\frac{\alpha}{m'}-\frac{1}{m}}\frac{\alpha}{(\alpha-m'+1)^{\frac{1}{m'}}}\epsilon(x) \left(h\frac{T^{1+\alpha+m}\Gamma(1+\alpha)\Gamma(1+m)}{\Gamma(2+\alpha+m)} V_0(x)-V(h,x)\right)^{\frac{1}{m}}\\[5pt]
=T^{\frac{\alpha}{m'}-\frac{1}{m}}\frac{\alpha}{(\alpha-m'+1)^{\frac{1}{m'}}}\epsilon(x)(-R_1(x))^{\frac{1}{m}}.
\end{multline}
Similarly
\begin{equation}
0\leq \N_{0a}-\N_a\leq T^{\alpha+\frac{m}{m-q}}\left(\alpha\left(1-\frac{q}{m}\right)+1   \right)^{-\frac{m}{m-q}}\lambda(x)(-R_1(x))^{\frac{q}{m}}
\end{equation}
It follows that
\begin{equation}
||(\N_t-\N_{0t})+(\N_a-\N_{0a})||_{L^\infty(\dom)}=\Ord(h^{\sigma^2}),
\end{equation}
so, by elliptic estimates we have
\begin{equation}
||R_2||_{W^{2,p}(\dom)}=\Ord(h^{\sigma^2}).
\end{equation}
The Dirichlet-to-Neumann map then has the following expansion as $h\to\infty$
\begin{multline}\label{lambda-expansion}
\Lambda^h_{\epsilon,\gamma,\lambda}(g)=\frac{T^{1+\alpha+m}\Gamma(1+\alpha)\Gamma(1+m)}{\Gamma(2+\alpha+m)}\gamma\pr_\nu V_0|_{\pr\dom}\\[5pt]
+h^{\frac{1}{m}-1}\gamma\pr_\nu V_t|_{\pr\dom}
+h^{\frac{q}{m}-1}\gamma\pr_\nu V_a|_{\pr\dom}
+\Ord(h^{\sigma^2-1}).
\end{multline}

Assume for now that $q\neq 1$. From our assumptions of the range of allowed values for $q$, we have that
\begin{equation}
\frac{1}{m}-1,\frac{q}{m}-1>\sigma^2-1.
\end{equation}
In this case it follows that the Neumann data $\gamma\pr_\nu V_t|_{\pr\dom}$, $\gamma\pr_\nu V_a|_{\pr\dom}$ are each determined individually by $\Lambda^{PM}_{\epsilon,\gamma,\lambda}$.

Let $W$ be any smooth solution to $\nabla\cdot(\gamma\nabla W)=0$. We have that
\begin{multline}
\la \gamma\pr_\nu V_t|_{\pr\dom},W|_{\pr\dom}\ra
=\int_\dom\gamma\nabla V_1\cdot\nabla W\dd x+
\frac{T^{\alpha+1}}{\alpha+1}\int_\dom\epsilon V_0^{\frac{1}{m}}W\dd x\\[5pt]
=\frac{T^{\alpha+1}}{\alpha+1}\int_\dom\epsilon V_0^{\frac{1}{m}}W\dd x.
\end{multline}
We can take $V_0$ to be of the form
\begin{equation}
V_0(x)=1+sH(x),\quad \nabla\cdot(\gamma\nabla H)=0,\;H>0.
\end{equation}
Then
\begin{equation}
m\left.\frac{\dd}{\dd s}\right|_{s=0}\int_\dom\epsilon \left(1+sH(x)\right)^{\frac{1}{m}}W\dd x
=\int_\dom \epsilon HW\dd x
\end{equation}
is determined by $\Lambda^{PM}_{\epsilon,\gamma,\lambda}$. From here it is easy to see that we must have
\begin{equation}
\int_\dom \left(\epsilon^{(i)}-\epsilon^{(ii)} \right)UW\dd x=0
\end{equation}
for all $U,W$ such that $\nabla\cdot(\gamma\nabla U)=\nabla\cdot(\gamma\nabla W)=0$. As shown in \cite{Bu} in the $n=2$ case and in \cite{SU} in the $n\geq3$ case,  this implies that $\epsilon^{(i)}=\epsilon^{(ii)}$. 

Similarly
\begin{equation}
\la \gamma\pr_\nu V_t|_{\pr\dom},W|_{\pr\dom}\ra=\frac{T^{\alpha+1+q}}{\alpha+1+q}\int_\dom\lambda V_0^{\frac{q}{m}}W\dd x
\end{equation}
and an identical argument gives that $\lambda^{(i)}=\lambda^{(ii)}$.

It remains to consider the case $q=1$. In this situation, we have that $\Lambda^{PM}_{\epsilon,\gamma,\lambda}$ determines all quantities of the form
\begin{multline}
\la \gamma\pr_\nu (V_t+V_a)|_{\pr\dom},W|_{\pr\dom}\ra
=\int_\dom V_0^{\frac{1}{m}} W\left(\frac{T^{\alpha+1}}{\alpha+1}\epsilon+ \frac{T^{\alpha+2}}{\alpha+2}\lambda  \right)\dd x.
\end{multline}
We can use our ability to continuously shrink the observation time $T$ in order to determine the $\epsilon$ and $\lambda$ terms individually. This concludes the proof.

\section{The inverse problem with partial data}\label{partial}

In this section we give a proof of Theorem \ref{thm-partial}. Most of the argument of the previous section still holds. In particular, we still have the expansion \eqref{lambda-expansion}, which in the same way as above implies that
\begin{equation}\label{ii1}
\int_\dom(\epsilon^{(i)}-\epsilon^{(ii)})V^{\frac{1}{m}}W\dd x=\int_\dom(\lambda^{(i)}-\lambda^{(ii)})V^{\frac{q}{m}}W\dd x=0,
\end{equation}
for any $V$, $W$ which satisfy $\triangle V=\triangle W=0$, $V|_{\gam}=W|_{\gam}=0$, where  $\gam=\pr\dom\setminus\overline{\sig}$. In what follows we will focus on the integral identity for the $\epsilon$ coefficients, the case of $\lambda$ being nearly identical.

\begin{lem}
There exists $U_0\in C^\infty(\overline{\dom})$ such that $U_0\geq0$, $\triangle U_0=0$, $U_0|_{\gam}=0$, and $\gam\subset\supp(\pr_\nu U_0|_{\pr\dom})$.
\end{lem}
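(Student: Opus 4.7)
The plan is to produce $U_0$ as the harmonic extension into $\dom$ of a suitably chosen nonnegative boundary datum $\phi$ that vanishes on $\gam$, and then to invoke the strong maximum principle together with Hopf's boundary point lemma to ensure that $\pr_\nu U_0$ is nonzero at every point of $\gam$.

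More precisely, I would first fix $\phi\in C^\infty(\pr\dom)$ with $\phi\geq 0$, $\phi|_{\gam}=0$, and $\phi\not\equiv 0$ on $\sig$ (e.g.\ take a smooth bump supported in an open subset of $\sig$ whose closure lies in $\sig$, so that $\phi$ extends smoothly by zero onto $\gam$). Define $U_0$ as the unique solution of the Dirichlet problem
\begin{equation}
\triangle U_0=0 \text{ in }\dom,\qquad U_0|_{\pr\dom}=\phi.
\end{equation}
Since $\pr\dom$ is smooth and $\phi\in C^\infty(\pr\dom)$, standard elliptic regularity up to the boundary gives $U_0\in C^\infty(\overline{\dom})$. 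The weak maximum principle yields $U_0\geq 0$ throughout $\overline{\dom}$, and because $\phi\not\equiv 0$ the strong maximum principle forces $U_0>0$ in the (connected) interior $\dom$.

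At each point $x_0\in\gam$ we have $U_0(x_0)=\phi(x_0)=0$, while $U_0>0$ in $\dom$. The smoothness of $\pr\dom$ provides an interior sphere at $x_0$, so Hopf's boundary point lemma applies and gives $\pr_\nu U_0(x_0)<0$ (with $\nu$ the outward unit normal). In particular $\pr_\nu U_0|_{\pr\dom}$ is nonvanishing at every point of $\gam$, and therefore $\gam\subset\supp(\pr_\nu U_0|_{\pr\dom})$, as required.

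There is no real obstacle in this argument; the only delicate point is making sure that the boundary datum can be chosen smoothly, nonnegative, not identically zero, and vanishing on all of $\gam$. This is why I would take $\phi$ to be a bump function compactly supported in the relatively open set $\sig\subset\pr\dom$, so that extending by zero across $\pr\sig$ produces a genuinely $C^\infty$ function on $\pr\dom$ to which elliptic regularity and Hopf's lemma can both be applied without further technicalities.
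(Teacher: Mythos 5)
Your proof is correct, but it takes a genuinely different route from the paper. You construct a single explicit candidate (the harmonic extension of a nonnegative bump supported in $\sig$) and get the strict sign $\pr_\nu U_0<0$ at every point of $\gam$ from the strong maximum principle plus Hopf's boundary point lemma; the smoothness of $\pr\dom$ supplies the interior ball condition, and elliptic regularity gives $U_0\in C^\infty(\overline{\dom})$, so all hypotheses are in place. The paper instead argues by contradiction at a single boundary point: if $\pr_\nu u_f(x_0)=0$ for every admissible datum $f$, the Green's function representation forces $\nu(x_0)\cdot\nabla_x G_\dom(x_0,\cdot)$ to have vanishing Cauchy data on $\sig$, unique continuation then makes it vanish identically, contradicting its singularity at $x_0$; a compactness argument over $\overline{\gam}$ then patches finitely many such data into one. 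Your approach is shorter and more elementary, avoids unique continuation and the compactness step entirely, and delivers the pointwise strict negativity of $\pr_\nu U_0$ on $\overline{\gam}$ in one stroke --- which is in fact the property the paper implicitly relies on later to ensure $U_0^{\frac{1}{m}-1}\in L^1(\dom)$ (since then $U_0$ vanishes only to first order in the distance to the boundary near $\gam$). The paper's Green's function argument is more robust in settings where a Hopf lemma is unavailable, but for the Laplacian on a smooth domain your route is entirely adequate.
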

\begin{proof}
If $f\in C^\infty(\pr\dom)$ is such that $f\geq0$, $f|_{\gam}=0$, let $u_f\in C^\infty(\overline{\dom})$ be the solution of 
\begin{equation}
\left\{\begin{array}{l}\triangle u_f=0,\\[5pt] u_f|_{\pr\dom}=f.\end{array}\right.
\end{equation}
Suppose there exists a point $x_0\in\gam$ such that 
\begin{equation}\label{hyp}
\pr_\nu u_f(x_0)=0,\quad\forall f.
\end{equation}

Let $G_\dom(x,y)$ be the (Dirichlet) Green's function associated to the domain $\dom$. We have that $G_\dom(x,\cdot)\in C^\infty(\overline{\dom}\setminus\{x\})$  and 
\begin{equation}
u_f(x)=-\int_{\pr\dom} f(y)\nu(y)\cdot\nabla_y G_\dom(x,y)\dd S(y).
\end{equation}
We then get
\begin{equation}
\pr_\nu u_f(x_0)=-\int_{\pr\dom} f(y)\nu(x_0)\cdot\nabla_x\left(\nu(y)\cdot\nabla_y G_\dom(x_0,y)\right)\dd S(y)
\end{equation}
By \eqref{hyp} we conclude that
\begin{equation}
\nu(y)\cdot\nabla_y\left( \nu(x_0)\cdot\nabla_xG_\dom(x_0,y)\right)=0,\quad\forall y\in\sig.
\end{equation}
Note that we also have 
\begin{equation}
\left\{\begin{array}{l}\triangle_y\left( \nu(x_0)\cdot\nabla_xG_\dom(x_0,y)\right)=0,\\[5pt]
 \nu(x_0)\cdot\nabla_xG_\dom(x_0,y)=0,\quad\forall y\in\sig.\end{array}\right.
\end{equation}
By unique continuation 
\begin{equation}
\nu(x_0)\cdot\nabla_xG_\dom(x_0,y)=0,\quad\forall y\in\dom.
\end{equation}
This is a contradiction (e.g. since $\nu(x_0)\cdot\nabla_xG_\dom(x_0,y)\to\infty$ and $y\to x_0$). 

We  have shown that for any $x\in\gam$ there exists $f_x\in C^\infty(\pr\dom)$  such that $f_x\geq0$, $f_x|_{\gam}=0$, and $\pr_\nu u_{f_x}(x)<0$. As $\overline{\gam}$ is compact, we can find $f_1,\ldots,f_N\in C^\infty(\pr\dom)$  such that $f_j\geq0$, $f_j|_{\gam}=0$, for $j=1,\ldots,N$, and
\begin{equation}
\pr_\nu u_f|_{\overline{\gam}}<0,\quad f=\sum_{j=1}^N f_j.
\end{equation}
\end{proof}

If we choose $V(x)=U_0(x)+sU(x)$ in \eqref{ii1}, where $U\geq0$, $\triangle U=0$ and $U|_\gam=0$, then 
\begin{multline}
0=\left.\frac{\dd}{\dd s}\right|_{s=0}\int_\dom (\epsilon^{(i)}(x)-\epsilon^{(ii)}(x))(U_0(x)+sU(x))^{\frac{1}{m}}W(x)\dd x\\[5pt]
=\int_\dom(\epsilon^{(i)}(x)-\epsilon^{(ii)}(x)) U_0^{\frac{1}{m}-1}(x)U(x)W(x)\dd x.
\end{multline}
Note that it is possible to differentiate in $s$ since $U_0^{\frac{1}{m}-1}\in L^1(\dom)$. We can move from non-negative $U$, $W$ to complex valued ones in the obvious way using linearity. With the notation
\begin{equation}
F(x)=(\epsilon^{(i)}(x)-\epsilon^{(ii)}(x)) U_0^{\frac{1}{m}-1}(x)
\end{equation}
we then have
\begin{equation}\label{ii2}
\int_\dom F(x) U(x)W(x)\dd x=0
\end{equation}
for all functions $U$, $W$ harmonic in $\dom$ and such that $U|_\gam=W|_\gam=0$. A similar problem, with $F\in L^\infty(\dom)$, was considered in \cite{DKSU}. We will adapt their argument to our case where $F\in L^1(\dom)$.

It is observed in \cite[Section 3]{DKSU} that, without loss of generality, we may assume that
\begin{equation}
\dom\subset\{x\in\R^n: |x+e_1|<1\},\quad \gam=\{x\in\pr\dom: x_1\leq -2c\},
\end{equation}
where $c>0$ is a positive constant, and that $0\in\pr\dom$. This is proven by checking that the structure of the integral identity \eqref{ii2} remains unchanged under an appropriately chosen conformal transformation. The same holds here, and under the above mentioned transformation we still retain the properties that $F\in L^1(\dom)\cap C^\infty(\dom)$ and that $F(x)$ only becomes unbounded as $x\to\gam$. For any vector $z\in\C^n$ we will use the notation
\begin{equation}
z=(z_1,z'),\quad z'\in\C^{n-1}.
\end{equation}

\subsection{A local result}

Let $\zeta\in\C^n$ be such that $\zeta^2=0$ and let $\chi\in C_0^\infty(\R^n)$ be such that $\supp(\chi)\subset\{x\in\R^n:x_1<-c\}$ and $\chi|_\gam=1$. We can construct harmonic functions
\begin{equation}
U(x,\zeta,h)= e^{-\frac{i}{h}x\cdot\zeta}+R(x,\zeta,h),
\end{equation} 
where $R$ solves
\begin{equation}
\left\{\begin{array}{l}\triangle R=0,\\[5pt] R|_{\pr\dom}=-(e^{-\frac{i}{h}x\cdot\zeta}\chi)|_{\pr\dom},\end{array}\right.
\end{equation}
and $h$ is a positive parameter. In \cite[Section 2]{KKU} it is shown that there exists $C>0$ so that the remainder terms satisfy the estimate
\begin{equation}\label{r-estimate}
||R(\cdot,\zeta,h)||_{C(\overline{\dom})}\leq C\left(1+\frac{|\zeta|^k}{h^k}\right) e^{-\frac{c}{h}\im\zeta_1} e^{\frac{1}{h}|\im\zeta'|},
\end{equation}
whenever $\im\zeta_1\geq0$.

Let $\zeta,\eta\in C^n$ both be such that $\zeta^2=\eta^2=0$, $\im\zeta_1,\im\eta_1\geq0$. Choosing 
\begin{equation}
U(x)=U(x,\zeta,h), \quad W(x)=U(x,\eta,h)
\end{equation}
in \eqref{ii2} and using \eqref{r-estimate} to control the terms involving remainder terms we get that
\begin{multline}
\left|\int_\dom e^{-\frac{i}{h}x\cdot(\zeta+\eta)} F(x)\dd x\right|\\[5pt]
\leq C||F||_{L^1(\dom)}\left(1+\frac{|\zeta|^k}{h^k}\right)\left(1+\frac{|\eta|^k}{h^k}\right)
e^{-\frac{c}{h}\min(\im\zeta_1,\im\eta_1)}e^{\frac{2}{h}(|\im\zeta'|+|\im\eta'|)}.
\end{multline}

Let $\sigma=ie_1+e_2\in\C^n$ and suppose $a\in(0,\infty)$. If $z\in\C^n$ is such that $|z-2iae_1|<2\varepsilon$ then (see \cite[Section 2]{DKSU}) there exist $\zeta,\eta\in C^n$, with $\zeta^2=\eta^2=0$, such that
\begin{equation}
z=\zeta+\eta,\quad |\zeta-a\sigma|<Ca\varepsilon,\quad |\eta+a\overline{\sigma}|<Ca\varepsilon,
\end{equation}
if $\varepsilon>0$ is sufficiently small. It follows that for such a $z$ we have
\begin{equation}\label{3-8}
\left|\int_\dom e^{-\frac{i}{h}x\cdot z} F(x)\dd x\right|\leq C\left(\frac{a}{h}\right)^{2k}||f||_{L^1(\dom)} e^{-\frac{ca}{2h}}e^{\frac{2C\varepsilon a}{h}}.
\end{equation}

Let $\mathcal{T}$ denote the Segal-Bargmann transform
\begin{equation}
\mathcal{T} F(z)=\int_\dom e^{-\frac{1}{2h}(z-y)^2}F(y)\dd y,\quad z\in\C^n.
\end{equation}
Note that
\begin{equation}
(z-y)^2=(\re z-y)^2-(\im z)^2+2i(\re z-y)\cdot\im z,
\end{equation}
so
\begin{equation}
|\mathcal{T} F(z)|\leq e^{\frac{1}{2h}|\im z|^2}||F||_{L^1(\dom)}.
\end{equation}
When $\re z_1\geq 0$, since $\dom\subset\{x\in\R^n:x_1\leq0\}$, we have the sharper estimate
\begin{equation}
|\mathcal{T} F(z)|\leq e^{\frac{1}{2h}(|\im z|^2-|\re z_1|^2)}||F||_{L^1(\dom)}.
\end{equation}

It is observed in \cite[Section 4]{DKSU} that the Segal-Bargman transform can also be written as
\begin{equation}
\mathcal{T} F(z)=(2\pi h)^{-\frac{n}{2}}\int_{\R^n\times\dom} e^{-\frac{1}{2h}(z^2+t^2)}e^{-\frac{i}{h} y\cdot(t+iz)} F(y)\dd t\dd y.
\end{equation}
Then when $\re z_1\geq 0$,
\begin{multline}
|\mathcal{T}F(z)|\leq (2\pi h)^{-\frac{n}{2}}\int_{\R^n} e^{\frac{1}{2h}(|\im z|^2-|\re z|^2-t^2)}
\left|\int_\dom e^{-\frac{i}{h}y\cdot(t+iz)} F(y)\dd y\right|\dd t\\[5pt]
\leq (2\pi h)^{-\frac{n}{2}} e^{\frac{1}{2h}(|\im z|^2-|\re z|^2)}
\left(\int_{|t|\leq\varepsilon a} e^{-\frac{t^2}{2h}}\left|\int_\dom e^{-\frac{i}{h}y\cdot(t+iz)} F(y)\dd y\right|\dd t\right.\\[5pt]
+\left. \int_{|t|\geq\varepsilon a} e^{-\frac{t^2}{2h}}\left|\int_\dom e^{-\frac{i}{h}y\cdot(t+iz)} F(y)\dd y\right| \right)\\[5pt]
\leq e^{\frac{1}{2h}(|\im z|^2-|\re z|^2)}\left(\sup_{|t|\leq a\varepsilon}\left|\int_\dom e^{-\frac{i}{h}y\cdot(t+iz)}F(y)\dd y\right|\right.\\[5pt]
+\left.\sqrt{2} e^{\frac{1}{h}|\re z'|}e^{-\frac{\varepsilon^2a^2}{4h}}\int_\dom|F(y)|\dd y\right).
\end{multline}
When $|z-2ae_1|<\varepsilon a$, by \eqref{3-8}, the above gives
\begin{equation}
|\mathcal{T} F(z)|\leq
C\left(\frac{a}{h}\right)^{2k}||f||_{L^1(\dom)} e^{\frac{1}{2h}(|\im z|^2-|\re z|^2)}\left(e^{-\frac{ca}{2h}}e^{\frac{2C\varepsilon a}{h}}+e^{-\frac{\varepsilon^2a^2}{4h}} e^{\frac{\varepsilon a}{h}}\right).
\end{equation}
We are free to choose the parameters $\varepsilon$ and $a$, and with $\varepsilon$ sufficiently small and $a$ sufficiently large we have
\begin{equation}
|\mathcal{T} F(z)|\leq Ch^{-2k}||F||_{L^1(\dom)}e^{\frac{1}{2h}(|\im z|^2-|\re z|^2-\frac{ca}{2})}.
\end{equation}

Let
\begin{equation}
\Phi(z_1)=\left\{\begin{array}{ll}
|\im z_1|^2, \quad&\text{for }\re z_1\leq0\\ |\im z_1|^2-|\re z_1|^2,\quad&\text{for }\re z_1\geq0.
\end{array}\right.
\end{equation}
For $z_1\in\C$, $x'\in\R^{n-1}$, the above estimates can be combined into
\begin{multline}
e^{-\frac{1}{2h}\Phi(z_1)}|\mathcal{T}F(z_1,x')|\\[5pt]
\leq Ch^{-2k}||F||_{L^1(\dom)}\left\{\begin{array}{ll}
1,\quad&\text{for }z_1\in\C,\\[5pt] e^{-\frac{ca}{4h}},\quad&\text{for }|z_1-2a|\leq\frac{\varepsilon a}{2},|x'|<\frac{\varepsilon a}{2}.\end{array}\right.
\end{multline}
This is analogous to the estimate \cite[equation (4.7)]{DKSU}. As in the original proof we can now apply \cite[Lemma 4.1]{DKSU} to conclude that there exist $c',\delta>0$ such that
\begin{equation}
|\mathcal{T}F(x|\leq Ch^{-2k}||F||_{L^1(\dom)}e^{-\frac{c'}{2h}},
\end{equation}
whenever $x\in\dom\cap\{x\in\R^n:|x_1|\leq\delta\}$. For such an $x$ let $\varphi\in C_0^\infty(\dom)$ be such that $\varphi(y)=1$ for $y$ in a small neighborhood of $x$. Then we have that as $h\to0$
\begin{equation}
(2\pi h)^{-\frac{n}{2}}\int_\dom e^{-\frac{1}{2h}(x-y)^2}\varphi(y)F(y)\dd y\to \varphi(x)F(x)=F(x),
\end{equation}
and by the dominated convergence theorem
\begin{equation}
(2\pi h)^{-\frac{n}{2}}\int_\dom e^{-\frac{1}{2h}(x-y)^2}(1-\varphi(y))F(y)\dd y\to 0.
\end{equation}
It follows that $F(x)=0$ for any $x\in\dom\cap\{x\in\R^n:|x_1|\leq\delta\}$.

\subsection{The global result}

Suppose $\dom_1\subset\dom_2\subset\R^n$ are two bounded smooth domains such that $\dom_2\setminus\overline{\dom_1}$ is not empty. We further assume that $\pr\dom_1\cap\pr\dom_2$ is an open subset of $\pr\dom_1$ and it has $C^\infty$ boundary. The following lemma follows easily, via the Sobolev embedding theorem, from a result in \cite{KKU}.

\begin{lem}[see {\cite[Lemma 2.6]{KKU}}]
The space
\begin{equation}
\left\{\int_{\dom_2}G_{\dom_2}(\cdot,y)a(y)\dd y: a\in C^\infty(\overline{\dom_2}), \supp a\subset\dom_2\setminus\overline{\dom_1}\right\}
\end{equation}
is dense in 
\begin{equation}
\left\{ U\in C^\infty(\overline{\dom_1}): \triangle U=0, U|_{\pr\dom_1\cap\pr\dom_2}=0\right\},
\end{equation}
in the $L^\infty(\dom_1)$ topology.
\end{lem}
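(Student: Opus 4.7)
The plan is a standard Runge-type density argument via Hahn--Banach duality, carried out at the $L^2$ level, followed by an elliptic-regularity upgrade from $L^2$ to $L^\infty$; this upgrade is precisely the ``Sobolev embedding'' step alluded to in the statement.

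\emph{The Hahn--Banach step.} Let $\mu\in L^2(\dom_1)$ be orthogonal to every function of the form $\int_{\dom_2}G_{\dom_2}(\cdot,y)a(y)\dd y$ with $a\in C_0^\infty(\dom_2\setminus\overline{\dom_1})$. Extend $\mu$ by zero to $\dom_2$ and let $w\in H^2(\dom_2)\cap H^1_0(\dom_2)$ solve $-\triangle w=\mu$ in $\dom_2$. Fubini, combined with the symmetry of $G_{\dom_2}$, converts the orthogonality hypothesis into $\int_{\dom_2\setminus\overline{\dom_1}}w(y)a(y)\dd y=0$ for every admissible $a$, and hence $w\equiv 0$ on the open set $\dom_2\setminus\overline{\dom_1}$. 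In particular $w=0$ on $\pr\dom_1$, and by $H^2$-regularity also $\pr_\nu w=0$ on the portion of $\pr\dom_1$ lying in the interior of $\dom_2$; the Dirichlet condition $w\in H^1_0(\dom_2)$ supplies $w=0$ on $\pr\dom_1\cap\pr\dom_2$ as well.

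Given any target $U\in C^\infty(\overline{\dom_1})$ with $\triangle U=0$ and $U|_{\pr\dom_1\cap\pr\dom_2}=0$, Green's second identity yields
\begin{equation}
\int_{\dom_1}U\mu\dd x=-\int_{\dom_1}U\triangle w\dd x=\int_{\pr\dom_1}\bigl(w\pr_\nu U-U\pr_\nu w\bigr)\dd S=0,
\end{equation}
since on $\pr\dom_1\cap\pr\dom_2$ both $U$ and $w$ vanish, while on $\pr\dom_1\setminus\pr\dom_2$ both $w$ and $\pr_\nu w$ vanish. By Hahn--Banach, $U$ lies in the $L^2(\dom_1)$-closure of the Green potentials, which establishes density in $L^2(\dom_1)$.

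\emph{Upgrade to $L^\infty(\dom_1)$.} Let $U_k$ be Green potentials converging to $U$ in $L^2(\dom_1)$; then $U_k-U$ is harmonic in $\dom_1$ with zero Dirichlet data on the open smooth piece $\pr\dom_1\cap\pr\dom_2$. Interior elliptic estimates bound $\|U_k-U\|_{L^\infty(K)}$ by $\|U_k-U\|_{L^2(\dom_1)}$ for every compact $K\subset\dom_1$, and boundary Schauder / $W^{k,p}$ estimates at $\pr\dom_1\cap\pr\dom_2$ extend this control up to that portion of the boundary. The main obstacle, and the genuine technical content of the upgrade, is the behaviour near the interface where $\pr\dom_1\cap\pr\dom_2$ meets $\pr\dom_1\setminus\pr\dom_2$; here the assumed $C^\infty$-regularity of $\pr(\pr\dom_1\cap\pr\dom_2)$, together with the fact that the Green potentials extend harmonically and smoothly across $\pr\dom_1\setminus\pr\dom_2$ (because $a$ is supported away from $\overline{\dom_1}$), is what allows one to apply the quoted estimate of \cite{KKU} and conclude uniform convergence on $\overline{\dom_1}$.
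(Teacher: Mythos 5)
Your Hahn--Banach step is correct as far as it goes: with $\mu\in L^2(\dom_1)$ annihilating the Green potentials, the function $w$ solving $-\triangle w=\mu$ in $\dom_2$ with $w\in H^1_0(\dom_2)$ does vanish on $\dom_2\setminus\overline{\dom_1}$, the trace identities $w=\pr_\nu w=0$ on $\pr\dom_1\setminus\pr\dom_2$ and $w=0$ on $\pr\dom_1\cap\pr\dom_2$ are justified for an $H^2$ function, and Green's identity then kills $\int_{\dom_1}U\mu$. This legitimately proves density in $L^2(\dom_1)$.

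The gap is the upgrade from $L^2$ to $L^\infty$. Density of a subspace in the $L^2$ topology does not imply density in the (strictly stronger) $L^\infty$ topology, and the elliptic estimates you invoke cannot close this: if $U_k\to U$ in $L^2(\dom_1)$, the difference $v_k=U_k-U$ is harmonic with zero data on $\pr\dom_1\cap\pr\dom_2$, but its boundary values on $\pr\dom_1\setminus\pr\dom_2$ are completely uncontrolled. A harmonic function can be arbitrarily small in $L^2(\dom_1)$ while having sup norm of order one near a boundary portion where no Dirichlet control is available (take the harmonic extension of a unit-height bump concentrated on a set of small surface measure in $\pr\dom_1\setminus\pr\dom_2$). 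Interior estimates and boundary estimates on the good piece therefore leave exactly the problematic region uncontrolled, and the fact that each individual $U_k$ extends harmonically across $\pr\dom_1\setminus\pr\dom_2$ gives no uniform bound on $U_k-U$ there. Your final appeal to ``the quoted estimate of \cite{KKU}'' is circular, since the statement being proved is precisely the $L^\infty$ version of that result. To make the argument work one must run the duality directly in the dual of $\bigl(C(\overline{\dom_1}),\|\cdot\|_\infty\bigr)$, i.e.\ test against finite signed measures $\mu$ supported in $\overline{\dom_1}$; then $w$ is only a potential of a measure (in $W^{1,p}$ for $p<n/(n-1)$), the traces $w,\pr_\nu w$ on $\pr\dom_1$ no longer make classical sense, and the Green identity step requires genuine potential-theoretic care --- this is the actual content of \cite[Lemma 2.6]{KKU}, which the paper simply cites (its stated proof is that lemma plus a Sobolev embedding to pass from a Sobolev norm to $L^\infty$).
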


Let $x_1\in\dom$ and $x_0\in\sig$. Let $\theta:[0,1]\to\overline{\dom}$ be a $C^1$ curve connecting $x_0=\theta(0)$ to $x_1=\theta(1)$, such that $\theta((0,1])\subset\dom$ and $\theta'(0)$ is the interior normal to $\pr\dom$ at $x_0$. Let
\begin{equation}
\Theta_\varepsilon(t)=\{x\in\overline{\dom}:dist(x,\theta([0,t]))\leq\varepsilon\}, \quad t\in[0,1],
\end{equation}
and let
\begin{equation}
I=\left\{t\in[0,1]: F|_{\Theta_\varepsilon(t)\cap\dom}=0\right\}.
\end{equation}
Note that $I$ is a closed subset of $[0,1]$. If $\varepsilon>0$ is small enough, the previous subsection implies that $I$ is not empty.

Suppose $t\in I$. Clearly then $[0,t]\subset I$. For sufficiently small $\varepsilon$ we have that $\pr\Theta_\varepsilon(t)\cap\dom\subset\sig$. We can find a smooth domain $\dom_1$ such that 
\begin{equation}
\dom\setminus\Theta_\varepsilon(t)\subset\dom_1,\quad \gam\subset\pr\dom\cap\pr\dom_1,
\end{equation}
and a second smooth domain $\dom_2$ such that
\begin{equation}
\dom\subset\dom_2,\quad \pr\dom\cap\pr\dom_1\subset\pr\dom\cap\pr\dom_2.
\end{equation}
We extend $F$ to be zero on $\dom_1\setminus\overline\dom$, $\dom_2\setminus\overline\dom$.

Let $K:(\dom_2\setminus\overline{\dom_1})\times (\dom_2\setminus\overline{\dom_1})\to\R$ be
\begin{multline}
K(x,s)=\int_{\dom_1} F(y)G_{\dom_2}(x,y)G_{\dom_2}(s,y)\dd y\\[5pt]=\int_{\dom} F(y)G_{\dom_2}(x,y)G_{\dom_2}(s,y)\dd y.
\end{multline}
$K$ is harmonic in both $x$ and $s$. If $x,s\in\dom_2\setminus\overline\dom$, by \eqref{ii2}, we have that $K(x,s)=0$. By unique continuation it follows that $K(x,s)=0$ for all $x,s\in\dom_2\setminus\overline{\dom_1}$. If $a,b\in C^\infty(\overline{\dom_2})$, $\supp a,\supp b\subset\dom_2\setminus\overline{\dom_1}$, then
\begin{equation}
\int K(x,s)a(x)b(s)\dd x\dd s=0,
\end{equation}
and by the above approximation lemma we have that
\begin{equation}
\int_{\dom_1}F(y)U(y)W(y)\dd y=0,
\end{equation}
for all $U,W\in C^\infty(\overline{\dom_1})$ which are harmonic in $\dom_1$ and such that $U|_{\pr\dom_1\cap\pr\dom_2}=W|_{\pr\dom_1\cap\pr\dom_2}=0$.

Applying the result of the previous subsection to the domain $\dom_1$, we can conclude the there is a $t'>t$ such that $t'\in I$. This is enough to conclude that $I$ is open in $[0,1]$, and hence $I=[0,1]$. This gives that $F(x_1)=0$, which completes the proof of Theorem \ref{thm-partial}.

\paragraph{Acknowledgments:} C.C. was supported by NSF of China under grants 11931011 and 11971333. The research of T.G. is supported by the Collaborative Research Center, membership no. 1283, Universit\"at Bielefeld. 
GU was partly supported by
NSF, a Walker Family Professorship at UW, and a Si Yuan Professorship at
IAS, HKUST. He was also supported by a Simons Fellowship. Part of this
work was done while he was visiting IPAM in Fall 2021.

\bibliography{pme}
\bibliographystyle{plain}

\end{document}